\def\A{\mathcal{A}}
\def\B{\mathcal B}
\def\F{\mathcal F}
\def\K{\mathcal K}
\def\B{\mathcal B}
\def\C{\mathcal C}
\def\H{\mathcal H}
\def\K{\mathcal K}
\def\P{\mathcal P}
\def\X{\mathcal X}
\def\amslatex{$\mathcal{A}\kern-.1667em\lower.5ex\hbox{$\mathcal{M}$}\kern-.125em\mathcal{S}$-\LaTeX}
\newtheorem{set}{set}[section]
\newtheorem{Corollary}[set]{Corollary}
\newtheorem{Definition}[set]{Definition}
\newtheorem{Lemma}[set]{Lemma}
\newtheorem{Proposition}[set]{Proposition}
\newtheorem{Theorem}[set]{Theorem}
\newcommand{\define}{\mathrel{\hbox{$\equiv$\hskip -.90em \lower .47ex \hbox{$\leftharpoondown$}}}}
\newcommand{\enifed}{\mathrel{\hbox{$\equiv$\hskip -.90em \lower .47ex \hbox{$\rightharpoondown$}}}}
\begin{document}
\title {\bf  Two-faced Families of Non-commutative Random Variables Having Bi-free Infinitely Divisible Distributions}
\author{ Mingchu Gao\address{Department of Mathematics,
Louisiana College,
Pineville, LA 71359,
USA,
Email: mingchu.gao@lacollege.edu
} }
\date{}
\maketitle
\begin{abstract}
We study two-faced families of non-commutative random variables having bi-free (additive) infinitely divisible distributions. We prove a limit theorem of the sums of bi-free two-faced families of random variables within a triangular array. As a corollary of our limit theorem, we get Voiculescu's bi-free central limit theorem. Using the full Fock space operator model,  we show that a two-faced pair of random variables has a bi-free (additive) infinitely divisible distribution if and only if its distribution is the limit distribution in our limit theorem.  Finally, we characterize the bi-free (additive) infinite divisibility of the distribution of a two-faced pair of random variables in terms of bi-free Levy processes.
\end{abstract}

{\bf Key Words.}  Bi-free Probability, Limit Theorems, Infinitely Divisible Distributions, Bi-free Levy Processes.

{\bf 2010 MSC} 46L54
\section*{Introduction}

On a free product of Hilbert spaces with a specified unit vector
there are two actions  of the operators of the initial space,
corresponding to a left and a right tensorial factorization
respectively (\cite{VDN}). In free probability, free random
variables can be modeled as left {\sl or} right actions on free
products of spaces. When  considering the algebra of left actions
{\sl and} the algebra of right actions on the free product space
simultaneously, Voiculescu \cite{Vo1} discovered  a new phenomenon
of  freely independent family of two-faced families of random
variables, which is called {\sl bi-free independence}. This
``two-faced" extension of free probability is called {\sl bi-free
probability} (or free probability for two-faced pairs) introduced in
\cite{Vo1} and \cite{Vo2} recently.

Bi-free independence is a more general notion than free independence. Classical independence and free independence can be treated as special cases of bi-freeness (\cite{PS}).  Voiculescu \cite{Vo1} demonstrated that many results in free probability such as existence of free cumulants and the free central limit theorem, have direct analogues in the bi-free setting. Voiculescu \cite{Vo1} showed the existence of bi-free cumulant polynomials, but did not give explicit formulas for the polynomials. Mastnak and Nica \cite{MN} introduced combinatorial objects called {\sl bi-non-crossing partitions}, associated a family of {\sl (l,r)-cumulants} with the combinatorial objects, and conjectured that bi-freeness was equivalent to the vanishing of these mixed cumulants. This conjecture was later proved by Charlesworth, Nelson, and Skoufranis in \cite{CNS1}. The same authors developed a theory of bi-freeness in an amalgamated setting in \cite{CNS2}.

In classical probability, infinitely divisible distributions appear
as a broad generalization of the central limit theorem and the Poisson
limit theorem: the limit distribution of the sums of i.i.d. random
variables within a triangular array.  Meanwhile,  infinitely
divisible distributions are closely  related to Levy processes, a
very important research area in probability theory (\cite{PSa}).    There has been a
well-developed theory on infinitely divisible distributions in free
probability: the limit distributions of free random variables within
a triangular array,  infinitely divisible distributions with respect
to the additive free convolution, and free Levy processes have a
perfect relation similar to those in classical probability
(\cite{NS}).  In bi-free probability,  Voiculescu \cite{Vo1} defined centered
bi-free Gaussian distributions and proved
an algebraic bi-free central limit theorem,  a
bi-free version of semicircle
distributions and the free central limit theorem, respectively  (7.2 and 7.3 in \cite{Vo1}). In \cite{Vo2}, Voiculescu studied a special kind of
two-faced families of random variables
where algebraic relations between left and right random variables
ensure that all moments can be computed from the ``two-band''
moments $\varphi(LR)$, where $L$ and $R$ are monomials in left and,
respectively, right random variables. A simplest example is
bi-partite systems where left and right variables commute with each
other.    In \cite{GHM}, the authors studied infinitely divisible
distributions in the bi-free probability setting for bi-partite
systems.   They derived a  bi-free analogue of the Levy-Hincin
formula for infinitely divisible distributions with respect to the
additive bi-free convolution,  constructed bi-free Levy processes
corresponding to bi-free infinitely divisible distributions, and
proved a bi-free limit theorem.
In this paper, we study infinitely divisible distributions with
respect to the additive bi-free convolution for general two-faced
pairs of non-commutative random variables,
i.e., we do not assume that the left and right random variables of the
two-faced pair commute with each other.

Besides this introduction, this paper contains 4 sections. In section 1  we review  the basic knowledge on bi-free probability used in sequel. In Section 2, we prove  limit theorems (Theorem 2.3 and Corollary 2.4) in bi-free probability, generalizing the limit theorem in free probability (Theorem 13.1 in \cite{NS}) and the limit theorem in bi-free probability for bi-partite systems (Theorem 3.1 in \cite{GHM}) to the general case of two-faced families of random variables. As a corollary of our limit theorems, we get Voiculescu's bi-free central limit theorem (Proposition 2.6). Section 3 is devoted to studying  two-faced pairs of random variables having bi-free infinitely divisible distributions. We show that a two-faced pair $(a,b)$ of self adjoint operators in a $C^*$-probability apace $(\A,\varphi)$ has a bi-free infinitely divisible distribution if and only if its distribution is the limit distribution of the sums of bi-free independent two-faced pairs of random variables within a triangular array (Theorem 3.7). Finally, in Section 4, we define  bi-free Levy processes (Definition 4.1). Our definition is very similar  to Definition 4.1 in \cite{GHM}. But we do not assume that the system is bi-partite. We find that a bi-free infinitely divisible distribution is exactly the distribution of $a_1=(a_{l,1}, a_{r,1})$ of a bi-free Levy process $\{a_t=(a_{l,t}, a_{r,t}):t\ge 0\}$ (Theorem 4.2), generalizing Theorem 4.2 in \cite{GHM} to the general case of two-faced pairs of random variables.

We refer the reader to \cite{VDN} and \cite{NS} for  free probability, to \cite{Vo1}, \cite{Vo2}, \cite{MN}, \cite{CNS1}, and \cite{CNS2} for bi-free probability, and to \cite{KR} for operator algebras.

{\bf Acknowledgement} The author is grateful to the referee(s) for carefully reviewing the paper, providing many valuable suggestions,  and finding out typos in the paper.  The author would like to thank Dr. Paul Skoufranis of Texas A. and M. University and Prof. Guimei An of Nankai University, China,  for pointing out mistakes and typos in the initial version of this paper.

\section{Preliminaries}

In this section, we review some basic concepts and results in bi-free probability used in sequel.

 {\bf Free Products of vector spaces} Let $\mathcal{X}$ be a vector space with a  vector $\xi$, and a subspace $\mathcal{X}_0$ with co-dimension 1 such that $\mathcal{X}=\mathbb{C}\xi\oplus\mathcal{X}_0$. Let $L(\X)$ be the space of all linear operators on $\X$. We use $(\mathcal{X}, \mathcal{X}_0, \xi)$ to denote vector space $\mathcal{X}$ with the above decomposition property.     Let $(\mathcal{X}_i, \mathcal{X}_{i,0}, \xi_i), i\in I$, be a family of vector spaces. The free product $(\mathcal{X}, \mathcal{X}_0,\xi)=*_{i\in I}(\mathcal{X}_i, \mathcal{X}_{i,0}, \xi_i)$ is defined as
 $$\mathcal{X}=\mathbb{C}\xi\oplus\mathcal{X}_0, \mathcal{X}_0=\oplus_{n\ge 1}(\oplus_{i_1\ne i_2\ne \cdots\ne i_n}\mathcal{X}_{i_1, 0}\otimes\mathcal{X}_{i_2,0}\otimes\cdots\otimes \mathcal{X}_{i_n,0}).$$
 We can define a linear functional $\phi: \mathcal{X}\rightarrow \mathbb{C}$ by $\phi(\xi)=1, \ker(\phi)=\mathcal{X}_0$, then a linear functional $\varphi_\xi:L(\X)\rightarrow \mathbb{C}$, $\varphi_\xi(T)=\phi( T\xi), \forall T\in L(\X)$. The pair $(L(\X), \varphi_\xi)$ is a non-commutative probability space.

  {\bf A Tensor Product Factorization of Free Product Vector Spaces} Let $\mathcal{X}=\mathcal{X}_0\oplus \mathbb{C}\xi$ be the free product of a family $\{(\mathcal{X}_i, \mathcal{X}_{0,i},\xi_i):i\in I\}$ of vector spaces.
 For $i\in I$, define
 $$\mathcal{X}(l,i)=\mathbb{C}\xi \oplus\bigoplus_{n\ge 1}(\bigoplus_{i_1\ne i_2\ne\cdots\ne i_n, i_1\ne i}\mathcal{X}_{0,i_1}\otimes\mathcal{X}_{0,i_2}\oplus\cdots\oplus\mathcal{X}_{0,i_n}),$$ and
 $$\mathcal{X}(r,i)=\mathbb{C}\xi \oplus\bigoplus_{n\ge 1}(\bigoplus_{i_1\ne i_2\ne\cdots\ne i_n, i_n\ne i}\mathcal{X}_{0,i_1}\otimes\mathcal{X}_{0,i_2}\oplus\cdots\oplus\mathcal{X}_{0,i_n}).$$
 Define a unitary operator $V_i: \mathcal{X}_i\otimes \mathcal{X}(l,i)\rightarrow \mathcal{X}$ by
 $$V_i: \xi_i\otimes\xi\mapsto \xi, \mathcal{X}_{0,i}\otimes\xi\mapsto \mathcal{X}_{0,i},
  \xi_i\otimes (\mathcal{X}_{0,i_1}\otimes\mathcal{X}_{0,i_2}\oplus\cdots\oplus\mathcal{X}_{0,i_n})\mapsto\mathcal{X}_{0,i_1}\otimes\mathcal{X}_{0,i_2}\oplus\cdots\oplus\mathcal{X}_{0,i_n}, $$
 $$\mathcal{X}_{0,i}\otimes (\mathcal{X}_{0,i_1}\otimes\mathcal{X}_{0,i_2}\oplus\cdots\oplus\mathcal{X}_{0,i_n})\mapsto\mathcal{X}_{0,i}\otimes \mathcal{X}_{0,i_1}\otimes\mathcal{X}_{0,i_2}\oplus\cdots\oplus\mathcal{X}_{0,i_n}.$$
 We can define  $W_i:\mathcal{X}(r,i)\otimes \mathcal{X}_i\rightarrow \mathcal{X}$ similarly.

An operator $T\in L(\mathcal{X}_i)$ can act on  the free product space $(\mathcal{X}, \mathcal{X}_0,\xi)$ from leftmost $\lambda_i(T)$, or from rightmost $\rho_i(T)$:
  $$\lambda_i(T)=V_i(T\otimes I)V_i^*, \rho_i(T)=W_i(I\otimes T)W_i^*.$$

 {\bf Bi-free independence}.  A two-faced family of random variables $$(\widehat{b}, \widehat{c}):=((b_i)_{i\in I}, (c_j)_{j\in J})$$ is an ordered pair of  two families of elements in a non-commutative probability space $(\A,\varphi)$. For a two-faced family $(\widehat{b}, \widehat{c})=((b_i)_{i\in I}, (c_j)_{j\in J})$ of random variables in $\A$, its distribution $\mu_{\widehat{b},\widehat{c}}:\mathbb{C}\langle X_i,Y_j|i\in I,j\in J\rangle\rightarrow \mathbb{C}$ is defined as
 $$\mu_{\widehat{b}, \widehat{c}}(P((X_i)_{i\in I}, (Y_j)_{j\in J}))=\varphi (P((b_i)_{i\in I}, (c_j)_{j\in J})), \forall P\in \mathbb{C}\langle X_i,Y_j|i\in I,j\in J\rangle. $$
 A pair of faces (or face-pair) in a non-commutative probability space $(\A,\varphi)$ is an ordered pair $(\B, \C)$ of two unital subalgebras of $\A$. Let $\pi=((\B_k, \C_k))_{k\in K}$ be a family of pairs of faces in $(\A,\varphi)$. The {\sl joint distribution} of $\pi$ is the linear functional $\mu_\pi: *_{k\in K}\B_k*\C_k\rightarrow \mathbb{C}$ defined by $\mu_\pi=\varphi \circ \alpha$,  where $\alpha:*_{k\in K}\B_k*\C_k\rightarrow \A $ is the homomorphism such that $$\alpha|_{\B_k}(x)=x, \forall x\in \B_k, \alpha|_{\C_k}(x)=x, \forall x\in \C_k. $$

 Let $z'=((b'_i)_{i\in I}, (c'_j)_{j\in J})$ and $z''=((b''_i)_{i\in I}, (c''_j)_{j\in J})$ be two two-faced families in a non-commutative probability space $(\A,\varphi)$. We say that $z'$ and $z''$ are {\sl bi-free} if there exist a free product $(\mathcal{X}, p,\xi)=(\mathcal{X}', p',\xi')*(\mathcal{X}'', p'',\xi'')$ of vector spaces and homomorphisms $$l^\varepsilon: \mathbb{C}\langle b_i^\varepsilon:i\in I\rangle\rightarrow \mathcal{L}(\mathcal{X}^\varepsilon), r^\varepsilon: \mathbb{C}\langle c_j^\varepsilon:j\in J\rangle\rightarrow L(\mathcal{X}^\varepsilon),\varepsilon \in \{',''\},$$ such that $T^\varepsilon:=(\lambda^\varepsilon\circ l^\varepsilon(b_i^\varepsilon)_{i\in I}, \rho^\varepsilon\circ r^\varepsilon (c^{\varepsilon}_j)_{j\in J})$ with $\varepsilon \in \{',''\}$ have a joint distribution in $(L(\mathcal{X}), \varphi_\xi)$ same as that of $z'$ and $z''$ in $(\A,\varphi)$.

 {\bf An example of bi-free two-faced families of random variables}. Let $\H=\oplus_{i\in I}\H_i$ be the direct sum of  complex Hilbert spaces, and $\mathcal{F}(\H)=\mathbb{C}\Omega\oplus \bigoplus_{n\ge 1}\H^{\otimes n}$ be the full Fock space. Let $$\tau_\H:B(\F(\H))\rightarrow \mathbb{C}, \tau_\H(T)=\langle T\Omega, \Omega\rangle, \forall T\in B(\F(\H)),$$ be the vector state on $B(\F(\H))$ corresponding to the vacuum vector $\Omega\in \F(\H)$. Then $(B(\F(\H)), \tau_\H)$ is a $C^*$-probability space.

 For $f\in \H, T\in B(\H), \xi=\xi_1\otimes\cdots\otimes \xi_n\in \underbrace{\H\otimes\cdots\otimes\H}_{n \text{times}}$, define
 $$l(f)\Omega=f, l(f)\xi=f\otimes\xi, r(f)\Omega=f, r(f)\xi=\xi\otimes f, $$
 $$\Lambda_l(T)\Omega=0, \Lambda_l(T)\xi=(T\xi_1)\otimes \xi_2\otimes \cdots\otimes \xi_n, \Lambda_r(T)\Omega=0, \Lambda_r(T)\xi=\xi_1\otimes\cdots\otimes \xi_{n-1}\otimes(T\xi_n).$$

\begin{Proposition}[Remark 3.5 in \cite{GHM}] Let $\B_i$ and $\C_i$ be the $C^*$-algebras generated by $\{l(f):f\in \H_i\}\cup \{\Lambda_l(T): T\in B(\H), T\H_i\subset \H_i, T|_{\H\ominus\H_i}=0\}$ and $\{r(f):f\in \H_i\}\cup \{\Lambda_r(T): T\in B(\H), T\H_i\subset \H_i, T|_{\H\ominus\H_i}=0\}$, respectively. Then $\{(\B_i, \C_i):i\in I\}$ is bi-free in $(B(\F(\H)), \tau_\H)$.
\end{Proposition}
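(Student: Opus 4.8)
The plan is to identify $(B(\F(\H)),\tau_\H)$, together with the given algebras, directly with the leftmost/rightmost actions on a free product of vector spaces, so that the conclusion becomes an immediate instance of the definition of bi-free independence recalled above. For each $i\in I$ set $\mathcal{X}_i=\F(\H_i)=\mathbb{C}\Omega_i\oplus\mathcal{X}_{0,i}$ with $\mathcal{X}_{0,i}=\bigoplus_{n\ge1}\H_i^{\otimes n}$, and let $(\mathcal{X},p,\xi)=*_{i\in I}(\mathcal{X}_i,\mathcal{X}_{0,i},\Omega_i)$ be the free product of these vector spaces. First I would check that $(\mathcal{X},\xi)$ is canonically unitarily isomorphic to $(\F(\H),\Omega)$ for $\H=\oplus_{i\in I}\H_i$: grouping the legs of an elementary tensor $h_1\otimes\cdots\otimes h_n\in\H^{\otimes n}$ (each $h_j$ homogeneous, $h_j\in\H_{i_j}$) into maximal runs of consecutive legs lying in a common summand identifies $\bigoplus_{n\ge1}\H^{\otimes n}$ with $\bigoplus_{n\ge1}\bigoplus_{j_1\ne\cdots\ne j_n}\mathcal{X}_{0,j_1}\otimes\cdots\otimes\mathcal{X}_{0,j_n}=\mathcal{X}_0$ and sends $\Omega$ to $\xi$; this map is unitary for the natural inner products, and under it the vacuum state $\tau_\H$ becomes the functional $\varphi_\xi$, as both read off the coefficient of $\Omega=\xi$.

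The technical heart is to verify that, under this identification, the ``localized'' creation and gauge operators in the definitions of $\B_i$ and $\C_i$ are precisely the leftmost, resp.\ rightmost, actions (in the sense of Section 1) of the corresponding operators on $\mathcal{X}_i=\F(\H_i)$. Writing $l_i,r_i,\Lambda_{l,i},\Lambda_{r,i}$ for the creation and gauge operators on $\F(\H_i)$ and $\lambda_i(\cdot)=V_i(\cdot\otimes I)V_i^*$, $\rho_i(\cdot)=W_i(I\otimes\cdot)W_i^*$ for the corresponding actions, I would show that for $f\in\H_i$ and $T\in B(\H)$ with $T\H_i\subset\H_i$ and $T|_{\H\ominus\H_i}=0$,
$$l(f)=\lambda_i(l_i(f)),\qquad \Lambda_l(T)=\lambda_i\bigl(\Lambda_{l,i}(T|_{\H_i})\bigr),\qquad r(f)=\rho_i(r_i(f)),\qquad \Lambda_r(T)=\rho_i\bigl(\Lambda_{r,i}(T|_{\H_i})\bigr).$$
This is a direct computation with $V_i$ (and symmetrically with $W_i$): applying $V_i^*$ to a reduced word of $\mathcal{X}$ either splits off its leading $\H_i$-block or, when the first leg of the word lies outside $\H_i$, inserts the vacuum $\Omega_i$ in the $\mathcal{X}_i$-slot; prepending $f$ --- resp.\ applying $T$ to the leading leg --- in that slot and re-embedding by $V_i$ reproduces exactly the defining formulas for $l(f)$ and $\Lambda_l(T)$ on $\F(\H)$. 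Here the support hypothesis $T|_{\H\ominus\H_i}=0$ is precisely what makes $\Lambda_l(T)$ annihilate every word whose first leg avoids $\H_i$, matching $\Lambda_{l,i}(T|_{\H_i})\Omega_i=0$. I expect this index bookkeeping to be where essentially all the work lies, though nothing beyond careful case analysis should be needed.

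Finally, since $V_i$ and $W_i$ are unitary, $\lambda_i$ and $\rho_i$ are injective unital $*$-homomorphisms $B(\mathcal{X}_i)\to B(\mathcal{X})$, hence isometric onto their ranges and compatible with passing to generated $C^*$-algebras. The identities above then give $\B_i=\lambda_i(\widetilde{\B}_i)$ and $\C_i=\rho_i(\widetilde{\C}_i)$, where $\widetilde{\B}_i\subset B(\mathcal{X}_i)$ is the $C^*$-algebra generated by $\{l_i(f):f\in\H_i\}\cup\{\Lambda_{l,i}(S):S\in B(\H_i)\}$ --- using that $T\mapsto T|_{\H_i}$ carries the admissible $T$'s onto all of $B(\H_i)$ --- and $\widetilde{\C}_i$ is the right-handed analogue. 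Taking $l_i:=(\lambda_i|_{\widetilde{\B}_i})^{-1}\colon\B_i\to B(\mathcal{X}_i)$ and $r_i:=(\rho_i|_{\widetilde{\C}_i})^{-1}\colon\C_i\to B(\mathcal{X}_i)$ gives unital homomorphisms with $\lambda_i\circ l_i=\mathrm{id}_{\B_i}$ and $\rho_i\circ r_i=\mathrm{id}_{\C_i}$, so the operator face-pairs $\bigl((\lambda_i\circ l_i)(\B_i),(\rho_i\circ r_i)(\C_i)\bigr)_{i\in I}$ coincide with $(\B_i,\C_i)_{i\in I}$ inside $(L(\mathcal{X}),\varphi_\xi)=(B(\F(\H)),\tau_\H)$ and in particular have the same joint distribution. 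By the definition of bi-free independence --- in its evident extension from a pair of two-faced families to an arbitrary family of pairs of faces --- this says exactly that $\{(\B_i,\C_i):i\in I\}$ is bi-free in $(B(\F(\H)),\tau_\H)$.
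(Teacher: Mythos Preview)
Your argument is correct and is the standard route to this fact. However, the paper does not actually prove this proposition: it is quoted from \cite{GHM} (Remark~3.5 there) and is stated without proof in Section~1 as background. So there is no ``paper's own proof'' to compare against; what you have written is essentially the proof one would give (and that underlies the cited reference): identify $\F(\oplus_i\H_i)$ with the free product $*_i\F(\H_i)$ via the maximal-run regrouping, check that the localized creation and gauge operators agree with the leftmost/rightmost actions $\lambda_i,\rho_i$ of their $\F(\H_i)$-counterparts, and read off bi-freeness from the definition.

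Two small remarks. First, the paper's Section~1 only spells out the definition of bi-freeness for a \emph{pair} of two-faced families; you are right that one needs the evident extension to an $I$-indexed family of face-pairs here, and it would be worth stating this explicitly rather than parenthetically. Second, in the paper $V_i,W_i$ are introduced as linear isomorphisms between vector spaces; your use of ``unitary'' relies on the Hilbert-space structure of the full Fock spaces, which is available in this concrete setting but should be noted since the abstract free-product construction in Section~1 is purely algebraic.
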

 {\bf Bi-free cumulants.}  Let $\chi=(h_1, h_2, \cdots, h_n)\in \{l,r\}^n$. Let's record explicitly where are the occurrences of $l$ and $r$ in $\chi$.
 $$\{m:1\le m\le n, h_m=l\}=\{m_l(1)<m_l(2)<\cdots, <m_l(u)\},$$
 $$ \{m:1\le m\le n, h_m=r\}=\{m_r(1)<m_r(2)<\cdots, <m_r(n-u)\}.$$
 Define a permutation $s_\chi:\{1, 2, \cdots, n\}\rightarrow \{1, 2, \cdots, n\}$, $s_\chi(i)=m_l(i)$, if $1\le i\le u$; $s_\chi(u+i)=m_r(n-u+1-i)$, if $1\le i\le n-u$.

 For a subset $V=\{i_1, i_2, \cdots, i_k\}$ of the set $[n]:=\{1, 2, \cdots, n\}$, $a_1\cdots, a_n\in \A$, define $$\varphi_V(a_1, \cdots, a_n)=\varphi(a_{i_1}a_{i_2}\cdots a_{i_k}).$$  Let $\P(n)$ be the set of all partitions of $[n]$. For a partition $\pi=\{V_1, V_2, \cdots, V_d\}\in \P(n)$, we define $$\varphi_\pi(a_1, \cdots, a_n):=\prod_{V\in \pi}\varphi_V(a_1, \cdots, a_n).$$  Define $\mathcal{P}^\chi(n)=\{s_\chi\circ \pi:\pi\in NC(n)\}$, where $NC(n)$ is the set of all non-crossing partitions of $[n]$ (Lecture 9 in \cite{NS}).  Let $(\A,\varphi)$ be a non-commutative probability space. The bi-free cumulants $(\kappa_\chi:\A^n\rightarrow \mathbb{C})_{n\ge 1, \chi\in \{\l,r\}^n}$ of $(\A,\varphi)$ are defined by
 $$\kappa_\chi (a_1, \cdots, a_n)=\sum_{\pi\in \mathcal{P}^{(\chi)}(n)}\varphi_{\pi}(a_1, \cdots, a_n)\mu_n(s^{-1}_\chi\circ\pi, 1_n)  \eqno (1.1)$$ for $n\ge 1, \chi \in \{l,r\}^n, a_1, \cdots, a_n\in A$, where $\mu_n$ is the Mobius function on $NC(n)$ (Lecture 10 in \cite{NS}). For a subset $V=\{i_1, i_2, \cdots, i_k\}\subseteq \{1, 2, \cdots, n\}$, let $\chi_V$ be the restriction of $\chi$ on $V$. We define $\kappa_{\chi, V}(a_1, a_2, \cdots, a_n)=\kappa_{\chi_V}(a_{i_1}, a_{i_2}, \cdots, a_{i_k})$.
 For a partition $\pi=\{V_1, V_2, \cdots, V_k\} \in \P^{(\chi)}$, we define $\kappa_{\chi, \pi}=\prod_{V\in \pi}\kappa_{\chi, V}(a_1, a_2, \cdots, a_n)$.  Then the bi-free cumulant appeared in $(1.1)$ is $\kappa_{\chi, 1_n}(a_1, \cdots, a_n)$. The bi-free cumulants are determined by the equation $$\varphi (a_1 a_2 \cdots a_n)=\sum_{\pi\in \P^{(\chi)}(n)}\kappa_{\chi,\pi}(a_1, a_2, \cdots, a_n), \forall a_1, \cdots, a_n\in \A,   \eqno (1.2)$$ for a $\chi:\{1, 2, \cdots, n\}\rightarrow \{l,r\}^n$.

 Charlesworth, Nelson, and Skoufranis \cite{CNS1} proved that two two-faced families
  $$z'=((z'_i)_{i\in I}, (z'_j)_{j\in J}), z''=((z''_i)_{i\in I}, (z''_j)_{j\in J})$$ in a non-commutative probability space $(\A,\varphi)$ are bi-free if and only if
 $$\kappa_\chi(z_{\alpha(1)}^{\epsilon_1},z_{\alpha(2)}^{\epsilon_2}, \cdots, z_{\alpha(n)}^{\epsilon_n})=0, \eqno (1.3)$$ whenever $\alpha:\{1,2,\cdots, n\}\rightarrow I\bigsqcup J$, $\chi:\{1, 2, \cdots, n\}\rightarrow \{l,r\}$ such that $\alpha^{-1}(I)=\chi ^{-1}(\{l\})$,   $\epsilon:[n]\rightarrow \{',''\}^n$ is not constant, and $n\ge 2$ (Theorem 4.3.1 in \cite{CNS1}).

\section{ Bi-free limit theorems}

Our goal, in this section,  is to prove  a bi-free limit theorem, an analogue of Theorem 13.1 in \cite{NS} in bi-free probability.

\begin{Lemma} Let $(\A, \varphi)$ be a non-commutative probability space, and for each $N\in \mathbb{N}$, $$\{(a_{l, N,1}, a_{r,N,1}), (a_{l,N,2}, a_{r,N,2}), \cdots, (a_{l, N,N}, a_{r,N,N})\}$$ be a sequence  of two-faced pairs of random variables in $\A$. An index tuple $\{(i(1), i(2), \cdots, i(n)): i(j)=1, 2, \cdots, N, j=1,2, \cdots, n\}$ corresponds to a partition $\pi$ of $\{1, 2, \cdots, n\}$ if $$p\sim_\pi q\Leftrightarrow i(p)=i(q), p, q=1, 2, \cdots, n. $$  The following  statements are equivalent.
 \begin{enumerate}
 \item For  all $n\in \mathbb{N}$, all $\pi\in \P(n)$, and all $\chi:\{1, 2, \cdots, n\}\rightarrow \{l,r\}$, limits $$\lim_{N\rightarrow \infty}\sum_{\sigma\le \pi, \sigma\in \P^{(\chi)}}N^{|\pi|}\kappa_{\chi,\sigma}(a_{\chi(1), N,i(1)}, a_{\chi(2), N,i(2)}, \cdots, a_{\chi(n), N,i(n)})$$ exist, where $\{i(1), i(2), \cdots, i(n)\}$ is an index tuple corresponding to partition $\sigma$.
\item The limits $$lim_{N\rightarrow \infty}N^{|\pi|}\kappa_{\chi,\sigma}(a_{\chi(1), N,i(1)}, a_{\chi(1), N, i(2)}, \cdots, a_{\chi(n), N,i(n)})$$ exist, for all $\{i(1), i(2), \cdots, i(n)\}$ correspond to partition $\sigma$, all $\chi:\{1, 2, \cdots, n\}\rightarrow \{l,r\}$ all $\sigma\in \P^{(\chi)}(n),  \sigma\le \pi$, all $\pi \in \P(n)$, and all $n\in \mathbb{N}$.
\item For each $n\in \mathbb{N}$, $\chi:\{1, 2, \cdots, n\}\rightarrow \{l,r\}$, limit $$\lim_{N\rightarrow \infty}N\kappa_{\chi, 1_n}(a_{\chi(1), N,i}, a_{\chi(2), N, i}, \cdots, a_{\chi(n), N,i})$$ exists, where $i=1, 2, \cdots, N$.
\end{enumerate}
\end{Lemma}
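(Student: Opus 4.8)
The plan is to prove the three-way equivalence via the cycle $(3)\Rightarrow(2)\Rightarrow(1)\Rightarrow(3)$, and I would stress that this is a purely combinatorial statement: it uses neither bi-freeness of the rows nor identical distribution within a row, only the multiplicativity of the bi-free cumulants over blocks, $\kappa_{\chi,\sigma}=\prod_{V\in\sigma}\kappa_{\chi,V}$, together with two elementary remarks — that the one-block partition $1_n$ always lies in $\P^{(\chi)}(n)$ (every permutation $s_\chi$ fixes it) while $\sigma\le 1_n$ for all $\sigma$, and that $N^{1-|\sigma|}\to 0$ as soon as $|\sigma|\ge 2$. The one observation I would isolate first is this: if an index tuple corresponds to $\sigma$ then it is constant on each block $V$ of $\sigma$, so every factor $\kappa_{\chi,V}(a_{\chi(1),N,i(1)},\dots,a_{\chi(n),N,i(n)})$ is a bi-free cumulant of order $|V|$, with the induced $\{l,r\}$-pattern, of the single two-faced pair carried by the common value of $i$ on $V$ — exactly the type of quantity whose normalized limit is controlled by $(3)$.

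For $(3)\Rightarrow(2)$ I would fix $n,\pi,\chi$, a partition $\sigma\in\P^{(\chi)}(n)$ with $\sigma\le\pi$, and an index tuple corresponding to $\sigma$, and write $N^{|\sigma|}\kappa_{\chi,\sigma}=\prod_{V\in\sigma}\big(N\,\kappa_{\chi,V}\big)$; by $(3)$ each factor converges, hence so does the product, and then $N^{|\pi|}\kappa_{\chi,\sigma}=N^{|\pi|-|\sigma|}\,\big(N^{|\sigma|}\kappa_{\chi,\sigma}\big)$ converges because $\sigma\le\pi$ forces $|\pi|-|\sigma|\le 0$; the limit is $\prod_{V\in\pi}\lim_N N\kappa_{\chi,V}$ when $\sigma=\pi$ and $0$ when $\sigma<\pi$. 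The step $(2)\Rightarrow(1)$ is then immediate, since the sum in $(1)$ is a finite sum over $\{\sigma\in\P^{(\chi)}(n):\sigma\le\pi\}$ of precisely the sequences that $(2)$ says converge, taken with the same index tuples.

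For $(1)\Rightarrow(3)$ I would induct on $n$. The base case $n=1$ is immediate: $(1)$ with $\pi=1_1$ already reads ``$\lim_N N\kappa_{\chi,1_1}(a_{\chi(1),N,i})$ exists''. For the inductive step, assuming $(3)$ at all orders below $n$, I would apply $(1)$ with $\pi=1_n$; since $|1_n|=1$ and every $\sigma$ satisfies $\sigma\le 1_n$, this asserts that the limit as $N\to\infty$ of $N\,\kappa_{\chi,1_n}(a_{\chi(1),N,c},\dots,a_{\chi(n),N,c})$ — the $\sigma=1_n$ term, $c$ the constant value of the corresponding index tuple — plus $\sum_{\sigma\ne 1_n}N\,\kappa_{\chi,\sigma}$ exists. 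For $\sigma\ne 1_n$ one has $|\sigma|\ge 2$, every block has fewer than $n$ elements, so by block-multiplicativity and the inductive hypothesis each $N\kappa_{\chi,V}$ converges, hence $\kappa_{\chi,V}\to 0$, hence $N\kappa_{\chi,\sigma}=N^{1-|\sigma|}\prod_{V\in\sigma}N\kappa_{\chi,V}\to 0$; removing this null contribution from the convergent total leaves $\lim_N N\kappa_{\chi,1_n}(a_{\chi(1),N,c},\dots,a_{\chi(n),N,c})$ existing for every $c$, which is $(3)$ at order $n$.

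The only step that is not pure bookkeeping is this inductive step of $(1)\Rightarrow(3)$: one has to extract the ``fully connected'' partition $\sigma=1_n$ from the single convergent combination that $(1)$ hands us, and verify that the index tuples attached to the other $\sigma$ genuinely make each of those terms a product of strictly lower-order cumulants, so that the inductive hypothesis plus the $N^{1-|\sigma|}$ decay annihilates it. Everything surrounding it — $(3)\Rightarrow(2)$ and $(2)\Rightarrow(1)$ — is a direct consequence of the block-multiplicativity of the cumulants and the order structure on partitions.
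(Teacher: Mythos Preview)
Your argument is correct, and the cycle $(3)\Rightarrow(2)\Rightarrow(1)\Rightarrow(3)$ closes cleanly; in particular the inductive extraction of the $\sigma=1_n$ term in $(1)\Rightarrow(3)$ is sound, since for $\sigma\neq 1_n$ the index tuple attached to $\sigma$ is constant on each block and every block has size $<n$, so the inductive hypothesis applies factor by factor.

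The paper takes a different route for the nontrivial direction. Rather than inducting on $n$, it proves $(1)\Rightarrow(2)$ by passing through \emph{moments}: applying $(1)$ with $\pi=1_n$ and invoking the moment--cumulant relation $(1.2)$ to recognise the resulting sum as $N\,\varphi(a_{\chi(1),N,i}\cdots a_{\chi(n),N,i})$, then building $N^{|\pi|}\varphi_\pi$ as a product of such one-block moment limits, and finally feeding these back into the M\"obius-inverted formula $(1.1)$ to obtain the convergence of $N^{|\pi|}\kappa_{\chi,\pi}$ (and hence of $N^{|\pi|}\kappa_{\chi,\sigma}$ via the factor $N^{|\pi|-|\sigma|}$). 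Your proof stays entirely on the cumulant side, using only block-multiplicativity and the elementary decay $N^{1-|\sigma|}\to 0$, and never touches $(1.1)$ or $(1.2)$. The payoff of the paper's detour is that the existence of $\lim_N N\varphi(a_{\chi(1),N,i}\cdots a_{\chi(n),N,i})$ drops out as an intermediate step---exactly one half of Lemma~2.2---whereas your approach is shorter and more self-contained but does not produce that moment limit along the way.
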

\begin{proof} $(2)\Rightarrow (1)$ is obvious, since the sum in (1) is a sum of finite terms.

$(1)\Rightarrow (2)$. For every $n\in \mathbb{N}$,  $\pi=1_n$, $|\pi|=1$, by (1.2), we have
\begin{align*}
&\lim_{N\rightarrow \infty}\sum_{\sigma\le \pi, \sigma\in \P^{(\chi)}}N^{|\pi|}\kappa_{\chi,\sigma}(a_{\chi(1), N,i(1)}, a_{\chi(2), N,i(2)}, \cdots, a_{\chi(n), N,i(n)})\\
=&\lim_{N\rightarrow \infty}N\sum_{\sigma\in \P^{(\chi)}}\kappa_{\chi,\sigma}(a_{\chi(1), N,i(1)}, a_{\chi(2), N,i(2)}, \cdots, a_{\chi(n), N,i(n)})\\
=&\lim_{N\rightarrow \infty}N\varphi(a_{\chi(1), N,i}a_{\chi(2), N, i}\cdots a_{\chi(n), N, i})
 \end{align*}
 exists, $i=1, 2, \cdots, N$.
 It implies that for  a partition $\pi=\{V_1, V_2, \cdots, V_d\}$,
 \begin{multline*}
 \lim_{N\rightarrow \infty}N^{|\pi|}\varphi_\pi(a_{\chi(1), N,i(1)}, a_{\chi(2), N, i(2)}, \cdots, a_{\chi(n), N,i(n)})\\
 =\lim_{N\rightarrow \infty}\prod_{j=1}^dN\varphi_{V_j}(a_{\chi(1), N,i}, a_{\chi(2), N, i}, \cdots, a_{\chi(n), N,i})
 \end{multline*} exists. From this, we have for $\pi\in \P^{(\chi)}(n)$, by (1.1),
 \begin{align*}
 &\lim_{N\rightarrow \infty}N^{|\pi|}\kappa_{\chi,\pi}(a_{\chi(1), N,i(1)}, a_{\chi(1), N, i(2)}, \cdots, a_{\chi(n), N,i(n)})\\
 =&\lim_{N\rightarrow \infty}\prod_{V\in \pi}N\kappa_{\chi,V}(a_{\chi(1), N,i(1)}, a_{\chi(1), N, i}, \cdots, a_{\chi(n), N,i})\\
 =&\lim_{N\rightarrow \infty}\prod_{V\in \pi}\sum_{\sigma\in \mathcal{P}^{(\chi|_V)}(|V|)}\frac{N}{N^{|\sigma|}}N^{|\sigma|}\varphi_{\sigma}((a_{\chi(1), N,i(1)}, a_{\chi(1), N, i(2)}, \cdots, a_{\chi(n), N,i(n)})|_V)\mu_{|V|}(s^{-1}_\chi(\sigma), 1_{|V|})\\
 =&\prod_{V\in \pi}\sum_{\sigma\in \mathcal{P}^{(\chi|_V)}(V)}(\lim_{N\rightarrow\infty}\frac{N}{N^{|\sigma|}}\\
 \times & \lim_{N\rightarrow\infty}N^{|\sigma|}\varphi_{\sigma}((a_{\chi(1), N,i(1)}, a_{\chi(1), N, i(2)}, \cdots, a_{\chi(n), N,i(n)})|_V)\mu_{|V|}(s^{-1}_\chi(\sigma), 1_{|V|}))
 \end{align*}
 exists. Therefore, for $\sigma\in \P^{(\chi)}, \sigma\le \pi, \pi\in \P(n), \chi:\{1, 2, \cdots, n\}\rightarrow \{l,r\}^n$, the limit
 \begin{multline*}
 \lim_{N\rightarrow \infty}N^{|\pi|}\kappa_{\chi,\sigma}(a_{\chi(1), N,i(1)}, a_{\chi(2), N,i(2)}, \cdots, a_{\chi(n), N,i(n)})\\
 =\lim_{N\rightarrow\infty}\frac{N^{|\pi|}}{N^{|\sigma|}}\lim_{N\rightarrow \infty}N^{|\sigma|}\kappa_{\chi,\sigma}(a_{\chi(1), N,i(1)}, a_{\chi(2), N,i(2)}, \cdots, a_{\chi(n), N,i(n)})
 \end{multline*} exists.

 $(2)\Rightarrow (3)$ is obvious. $(3)\Rightarrow (2)$ is also obvious, by the proof of $(1)\Rightarrow (2)$.
\end{proof}
The following lemma is a bi-free probability version of Lemma 13.2 in \cite{NS}. Using equations (1.1) and (1.2), we can get a proof as same as that of Lemma 13.2 in \cite{NS}.
\begin{Lemma} Let $(\A, \varphi)$ be a non-commutative probability space, and for each $N\in \mathbb{N}$, $$\{(a_{l, N,1}, a_{r,N,1}), (a_{l,N,2}, a_{r,N,2}), \cdots, (a_{l, N,N}, a_{r,N,N})\}$$ be a sequence  of two-faced pairs of random variables in $\A$.Then the following two statements are equivalent.
\begin{enumerate}
\item For each $n\in \mathbb{N}$, $\chi:\{1, 2, \cdots, n\}\rightarrow \{l,r\}$, limit $$\lim_{N\rightarrow \infty}N\kappa_{\chi, 1_n}(a_{\chi(1), N,i}, a_{\chi(2), N, i}, \cdots, a_{\chi(n), N,i})$$ exists, where $i=1, 2, \cdots, N $.
\item The limit $$\lim_{N\rightarrow \infty}N\varphi(a_{\chi(1), N,i} a_{\chi(1), N, i} \cdots a_{\chi(n), N, i})$$ exists, for every $i=1, 2, \cdots, N$, $\chi:\{1, 2, \cdots, n\}\rightarrow \{l,r\}$, and $n\in \mathbb{N}$.
\end{enumerate}
If the above limits exist, they are equal to one another, i. e.,
$$\lim_{N\rightarrow \infty}N\kappa_{\chi, 1_n}(a_{\chi(1), N,i}, a_{\chi(2), N, i}, \cdots, a_{\chi(n), N,i})=\lim_{N\rightarrow \infty}N\varphi(a_{\chi(1), N,i} a_{\chi(1), N, i} \cdots a_{\chi(n), N, i}).$$
\end{Lemma}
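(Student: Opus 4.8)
The plan is to follow the proof of Lemma~13.2 in \cite{NS} almost verbatim, running an induction on $n$ in which the moment--cumulant relations $(1.1)$ and $(1.2)$ are played against the crude power of $N$ produced by multiplicativity of $\kappa_{\chi,\pi}$ and $\varphi_\pi$ over the blocks of $\pi$. Fix $i\in\{1,2,\cdots,N\}$ and abbreviate $a_j:=a_{\chi(j),N,i}$. The point that makes the induction work is that for a block $V=\{j_1<j_2<\cdots<j_k\}\subseteq[n]$ the restricted tuple $(a_{j_1},\cdots,a_{j_k})$ is again of the single-index form $(a_{\chi_V(1),N,i},\cdots,a_{\chi_V(k),N,i})$, so that $\kappa_{\chi,V}(a_1,\cdots,a_n)=\kappa_{\chi_V,1_k}(a_{\chi_V(1),N,i},\cdots,a_{\chi_V(k),N,i})$ and $\varphi_V(a_1,\cdots,a_n)=\varphi(a_{\chi_V(1),N,i}\cdots a_{\chi_V(k),N,i})$ are exactly the quantities controlled by hypotheses $(1)$ and $(2)$ at length $k$. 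The base case $n=1$ is immediate, since $\kappa_{\chi,1_1}(a_1)=\varphi(a_1)$ for either value of $\chi(1)$.

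For the inductive step, assume the equivalence and the equality of limits at every length $m<n$. First I would record the \emph{vanishing of the proper terms}: if $\pi\in\P^{(\chi)}(n)$ and $\pi\neq 1_n$, then $|\pi|\ge 2$ (the permutation $s_\chi$ preserves the number of blocks, and $1_n$ is the only one-block partition of $[n]$), and by multiplicativity
$$N\,\kappa_{\chi,\pi}(a_1,\cdots,a_n)=N^{\,1-|\pi|}\prod_{V\in\pi}N\,\kappa_{\chi,V}(a_1,\cdots,a_n),$$
$$N\,\varphi_{\pi}(a_1,\cdots,a_n)=N^{\,1-|\pi|}\prod_{V\in\pi}N\,\varphi_V(a_1,\cdots,a_n).$$
Since each block has size $<n$, the induction hypothesis makes every factor $N\,\kappa_{\chi,V}(a_1,\cdots,a_n)$ (under $(1)$), resp.\ $N\,\varphi_V(a_1,\cdots,a_n)$ (under $(2)$), converge, and as $1-|\pi|\le -1$ both products tend to $0$.

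Then the two implications fall out by isolating the block partition $1_n$. Multiplying $(1.2)$ by $N$ gives $N\varphi(a_1\cdots a_n)=N\,\kappa_{\chi,1_n}(a_1,\cdots,a_n)+\sum_{\pi\neq 1_n}N\,\kappa_{\chi,\pi}(a_1,\cdots,a_n)$; under $(1)$ the last sum tends to $0$ by the previous paragraph, so $\lim_N N\varphi(a_1\cdots a_n)$ exists and equals $\lim_N N\,\kappa_{\chi,1_n}(a_1,\cdots,a_n)$, which is $(1)\Rightarrow(2)$ together with the asserted equality. Symmetrically, multiplying $(1.1)$ by $N$ and using $\mu_n(1_n,1_n)=1$ gives $N\,\kappa_{\chi,1_n}(a_1,\cdots,a_n)=N\varphi(a_1\cdots a_n)+\sum_{\pi\neq 1_n}N\,\varphi_\pi(a_1,\cdots,a_n)\,\mu_n(s_\chi^{-1}\circ\pi,1_n)$; under $(2)$ the sum tends to $0$, yielding $(2)\Rightarrow(1)$ and once more the equality of the two limits.

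I expect the only delicate point to be the bookkeeping compressed into the first paragraph: verifying that restricting the sign pattern $\chi$ to a block $V$ really does turn $\kappa_{\chi,V}$ into an honest bi-free cumulant $\kappa_{\chi_V,1_{|V|}}$ of a single-index tuple (so that hypothesis $(1)$ or $(2)$ applies to it), i.e.\ that the $s_\chi$-twisted non-crossing combinatorics restricts compatibly to the block structure of $\pi$. This is exactly what the definitions of $\kappa_{\chi,V}$ and $\varphi_V$ recalled in Section~1 are set up to provide, so once it is written out carefully the argument is word-for-word the free case, and I do not anticipate a genuine obstacle beyond this verification.
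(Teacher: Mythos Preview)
Your proposal is correct and is precisely what the paper has in mind: it does not give an independent proof but simply remarks that, using the bi-free moment--cumulant relations $(1.1)$ and $(1.2)$, the argument is identical to that of Lemma~13.2 in \cite{NS}. Your write-up carries this out verbatim, and the ``delicate point'' you flag is handled exactly by the definition $\kappa_{\chi,V}(a_1,\ldots,a_n)=\kappa_{\chi_V}(a_{i_1},\ldots,a_{i_k})$ recalled in Section~1, so there is no hidden obstacle.
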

For each $N\in \mathbb{N}$, let $$\{a_{N,1}=(a_{l,N,1}, a_{r,N,1}), a_{N,2}=(a_{l,N,2}, a_{r,N,2}), \cdots, a_{N,N}=(a_{l,N,N}, a_{r,N,N})\}$$ be a bi-free family of $N$ identically distributed two-faced fairs of random variables in a non-commutative probability $(\A,\varphi)$. It follows that, for $n\in \mathbb{N}, \chi:\{1, 2, \cdots, n\}\rightarrow \{l,r\}$, the moments $$\varphi(a_{\chi(1), N,i}\cdots a_{\chi(n), N, i}), 1\le i\le N,$$ are independent of $i$. Let $S_{h, N}=\sum_{i=1}^Na_{h,N,i}$, for $h\in \{l.r\}$, and $S_N=(S_{l,N}, S_{r,N})$ be the two-faced pair of random variables in $(\A,\varphi)$. Then we have the following limit theorem.
\begin{Theorem} The sequence of two-faced pairs $\{S_N:N\ge 1\}$ converges in distribution to a two-faced pair $b=(b_l, b_r)$ in a non-commutative probability space $(\B,\phi)$, as $N\rightarrow \infty$, i. e.,  $$S_N\stackrel{distr}{\longrightarrow}(b_l,b_r), N\rightarrow \infty, \eqno (2.1)$$ if and only if for each $n\ge 1$, and $\chi:\{1, 2, \cdots, n\}\rightarrow \{l,r\}$, the limit
$$\lim_{N\rightarrow \infty}N\varphi(a_{\chi(1),N,1}a_{\chi(2),N,1}\cdots a_{\chi(n),N,1}) \eqno (2.2)$$ exists.

Furthermore, if the limits exist, then the joint distribution of the limit pair $b=(b_l,b_r)$ is determined in terms of bi-free cumulants by $$\kappa_\chi(b):=\kappa_{\chi, 1_n}(b_{\chi(1)}, b_{\chi(2)}, \cdots, b_{\chi(n)})=\lim_{N\rightarrow \infty}N\varphi(a_{\chi(1),N,1} a_{\chi(2),N,1}\cdots a_{\chi(n),N,1}),$$ for $\chi:\{1, 2, \cdots, n\}\rightarrow \{l,r\}$.
\end{Theorem}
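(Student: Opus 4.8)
The plan is to follow the strategy of Theorem 13.1 in \cite{NS}, transplanted to the bi-free setting, using the cumulant machinery of Section 1 and the two preceding lemmas. The essential point is that bi-freeness lets us expand cumulants of the sums $S_{h,N}$ into sums of cumulants of the summands, and that the identical-distribution and bi-free-independence hypotheses collapse this expansion to something controlled by $N$ and $|\sigma|$.

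First I would compute, for fixed $n$ and $\chi:\{1,\dots,n\}\to\{l,r\}$, the cumulant
$\kappa_{\chi,1_n}(S_{\chi(1),N},\dots,S_{\chi(n),N})$. Expanding each $S_{h,N}=\sum_{i=1}^N a_{h,N,i}$ by multilinearity of the bi-free cumulants gives a sum over all index tuples $(i(1),\dots,i(n))\in\{1,\dots,N\}^n$ of terms $\kappa_{\chi,1_n}(a_{\chi(1),N,i(1)},\dots,a_{\chi(n),N,i(n)})$. By the vanishing-of-mixed-cumulants characterization of bi-freeness (equation (1.3) applied with the constant/non-constant dichotomy on the index function), such a term is zero unless the tuple is constant, i.e. $i(1)=\cdots=i(n)=i$. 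Since there are exactly $N$ such constant tuples and the summands are identically distributed, this yields
\begin{equation*}
\kappa_{\chi,1_n}(S_{\chi(1),N},\dots,S_{\chi(n),N}) = N\,\kappa_{\chi,1_n}(a_{\chi(1),N,1},\dots,a_{\chi(n),N,1}).
\end{equation*}
Hence the left side converges as $N\to\infty$ if and only if $\lim_N N\kappa_{\chi,1_n}(a_{\chi(1),N,1},\dots,a_{\chi(n),N,1})$ exists, which by Lemma 2.2 is equivalent to the existence of the moment limit (2.2), and in that case the two limits agree.

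Next I would upgrade convergence of each individual cumulant $\kappa_{\chi,1_n}(S_{\chi(1),N},\dots,S_{\chi(n),N})$ to convergence in distribution of $S_N=(S_{l,N},S_{r,N})$. By the moment--cumulant formula (1.2), every moment $\varphi(S_{\chi(1),N}\cdots S_{\chi(n),N})$ is a finite polynomial (with coefficients independent of $N$, coming from the fixed index set $\P^{(\chi)}(n)$) in the quantities $\kappa_{\chi',1_m}(S_{\chi'(1),N},\dots,S_{\chi'(m),N})$ for $m\le n$; since each of these converges, so does every moment of $S_N$. This is exactly convergence in distribution of the two-faced pair, and it produces a distribution $\mu_{b_l,b_r}$ realized (by, e.g., a GNS-type construction, or simply as a limit functional on $\mathbb{C}\langle X,Y\rangle$) by some pair $b=(b_l,b_r)$ in a space $(\B,\phi)$. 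Conversely, if $S_N\to b$ in distribution, then all moments converge, hence by (1.1) all cumulants $\kappa_{\chi,1_n}(S_{\chi(1),N},\dots,S_{\chi(n),N})$ converge, and by the boxed identity above $\lim_N N\varphi(a_{\chi(1),N,1}\cdots a_{\chi(n),N,1})$ exists. Finally, reading off the limiting cumulants: $\kappa_\chi(b)=\kappa_{\chi,1_n}(b_{\chi(1)},\dots,b_{\chi(n)})=\lim_N \kappa_{\chi,1_n}(S_{\chi(1),N},\dots,S_{\chi(n),N})=\lim_N N\varphi(a_{\chi(1),N,1}\cdots a_{\chi(n),N,1})$, using Lemma 2.2 for the last equality.

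The main obstacle is the first step: justifying carefully that the multilinear expansion of $\kappa_{\chi,1_n}$ over index tuples kills all non-constant tuples. This requires checking that the hypothesis of (1.3) genuinely applies — that a non-constant tuple $(i(1),\dots,i(n))$ corresponds to a non-constant labeling by the bi-free family $\{a_{N,i}\}_i$, so that the relevant mixed cumulant vanishes — and this is where the distinction between bi-freeness and mere freeness must be handled, since the cumulant $\kappa_{\chi,1_n}$ mixes left and right entries according to $\chi$. Everything after that is bookkeeping with Lemmas 2.1 and 2.2 and the moment--cumulant relations.
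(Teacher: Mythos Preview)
Your argument is correct and, in fact, somewhat cleaner than the paper's. The paper expands the \emph{moments} $\varphi(S_{\chi(1),N}\cdots S_{\chi(n),N})$ over index tuples, groups the tuples by their kernel partition $\pi\in\P(n)$, counts $N(N-1)\cdots(N-|\pi|+1)\sim N^{|\pi|}$ tuples per $\pi$, and then invokes Lemma~2.1 (the equivalence of the three limit conditions on $N^{|\pi|}\kappa_{\chi,\sigma}$ for $\sigma\le\pi$) together with Lemma~2.2 to reduce to condition (2.2). You instead expand the \emph{cumulant} $\kappa_{\chi,1_n}(S_{\chi(1),N},\dots,S_{\chi(n),N})$ by multilinearity and kill all off-diagonal terms in one stroke via the vanishing-of-mixed-cumulants criterion (1.3); this yields $\kappa_{\chi,1_n}(S_N)=N\kappa_{\chi,1_n}(a_{N,1})$ directly, and the equivalence with moment convergence is then just (1.1)--(1.2) plus Lemma~2.2. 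Your route therefore sidesteps Lemma~2.1 entirely and avoids the bookkeeping with the factors $N^{|\pi|}/N^{|\sigma|}$; the price is only that you must check (as you note) that the multivariate version of (1.3) for an $N$-fold bi-free family applies, which is exactly Theorem~4.3.1 of \cite{CNS1}. Both approaches land on the same final identity $\kappa_\chi(b)=\lim_N N\varphi(a_{\chi(1),N,1}\cdots a_{\chi(n),N,1})$ via Lemma~2.2.
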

\begin{proof}
We follow the idea in the proof of Theorem 13.1 in \cite{NS}. For an $n\ge 1$, $\chi:\{1, 2, \cdots, n\}\rightarrow \{l,r\}$, we have
\begin{align*}
\lim_{N\rightarrow \infty}\varphi_\chi(S_N):=&\lim_{N\rightarrow\infty}\varphi (S_{\chi(1),N}S_{\chi(2), N}\cdots S_{\chi(n),N})\\
=&\lim_{N\rightarrow\infty}\sum_{i(1), i(2), \cdots, i(n)=1}^N\varphi(a_{\chi(1),N,i(1)}a_{\chi(2),N,i(2)}\cdots a_{\chi(n),N,i(n)})\\
=&\lim_{N\rightarrow \infty}\sum_{\pi\in \P(n)}N(N-1)\cdots(N-|\pi|+1)\varphi(a_{\chi(1),N,i(1)}a_{\chi(2),N,i(2)}\cdots a_{\chi(n),N,i(n)})\\
=&\sum_{\pi\in \P(n)}(\lim_{N\rightarrow \infty}\frac{N(N-1)\cdots(N-|\pi|+1)}{N^{|\pi|}}\\
\times &\lim_{N\rightarrow \infty}N^{|\pi|}\varphi(a_{\chi(1),N,i(1)}a_{\chi(2),N,i(2)}\cdots a_{\chi(n),N,i(n)}))\\
=&\lim_{N\rightarrow \infty}\sum_{\pi\in \P(n)}N^{|\pi|}\varphi(a_{\chi(1),N,i(1)}a_{\chi(2),N,i(2)}\cdots a_{\chi(n),N,i(n)}),
\end{align*}
where $\{i(1), i(2), \cdots, i(n)\}$ is an index tuple corresponding to partition $\pi$. By the discussion in the proof of Theorem 13.1 in \cite{NS}, $\{S_N:N\ge 1\}$ converges in distribution if and only if $$\lim_{N\rightarrow \infty}N^{|\pi|}\varphi(a_{\chi(1),N,i(1)}a_{\chi(2),N,i(2)}\cdots a_{\chi(n),N,i(n)})$$ exists for all $\pi \in \P(n)$, where $\{i(1), i(2), \cdots, i(n)\}$ is an index tuple corresponding to partition $\pi$.  For a $\pi\in \P(n)$, by (1.2) and (1.3), we have
\begin{align*}
&\lim_{N\rightarrow \infty}N^{|\pi|}\varphi(a_{\chi(1),N,i(1)}a_{\chi(2),N,i(2)}\cdots a_{\chi(n),N,i(n)})\\
=&\lim_{N\rightarrow \infty}\sum_{\sigma\in \P^{(\chi)}}N^{|\pi|}\kappa_{\chi,\sigma}(a_{\chi(1),N,i(1)},a_{\chi(2),N,i(2)},\cdots, a_{\chi(n),N,i(n)})\\
=&\lim_{N\rightarrow \infty}\sum_{\sigma\in \P^{(\chi)}, \sigma\le \pi}N^{|\pi|}\kappa_{\chi,\sigma}(a_{\chi(1),N,i(1)},a_{\chi(2),N,i(2)},\cdots, a_{\chi(n),N,i(n)})
\end{align*}
By Lemmas 2.1 and 2.2, we get that $(2.1)$ is equivalent to $(2.2)$.

If the existence of the limits is assumed, then
\begin{align*}
&\lim_{N\rightarrow\infty}\varphi (S_{\chi(1),N}S_{\chi(2), N}\cdots S_{\chi(n),N})\\
=&\lim_{N\rightarrow \infty}\sum_{\pi\in \P(n)}N^{|\pi|}\varphi(a_{\chi(1),N,i(1)}a_{\chi(2),N,i(2)}\cdots a_{\chi(n),N,i(n)})\\
=&\lim_{N\rightarrow \infty}\sum_{\pi\in \P(n)}\sum_{\sigma\in \P^{(\chi)}, \sigma\le \pi}N^{|\pi|}\kappa_{\chi,\sigma}(a_{\chi(1),N,i(1)},a_{\chi(2),N,i(2)},\cdots, a_{\chi(n),N,i(n)})\\
=&\lim_{N\rightarrow \infty}\sum_{\pi\in \P^{\chi}(n)}N^{|\pi|}\kappa_{\chi,\pi}(a_{\chi(1),N,i(1)},a_{\chi(2),N,i(2)},\cdots, a_{\chi(n),N,i(n)})\\
=&\sum_{\pi\in \P^{\chi}(n)}\lim_{N\rightarrow \infty}N^{|\pi|}\kappa_{\chi,\pi}(a_{\chi(1),N,i(1)},a_{\chi(2),N,i(2)},\cdots, a_{\chi(n),N,i(n)})
\end{align*}
On the other hand, by equation $(2.1)$, for every $n\in \mathbb{N}, \chi:\{1, 2, \cdots, n\}\rightarrow \{l,r\}$
\begin{align*}
\phi(b_{\chi(1)}, b_{\chi(2)},\cdots, b_{\chi(n)})=&\sum_{\pi\in \P^{(\chi)}(n)}\kappa_{\chi,\pi}(b_{\chi(1)}, b_{\chi(2)},\cdots, b_{\chi(n)})\\
=&\lim_{N\rightarrow\infty}\varphi (S_{\chi(1),N}S_{\chi(2), N}\cdots S_{\chi(n),N})\\
=&\sum_{\pi\in \P^{\chi}(n)}\lim_{N\rightarrow \infty}N^{|\pi|}\kappa_{\chi,\pi}(a_{\chi(1),N,i(1)},a_{\chi(2),N,i(2)},\cdots, a_{\chi(n),N,i(n)}).
\end{align*}
By the definition of bi-free cumulants, the cumulants are determined uniquely by (1.2) (Proposition 5.2 in \cite{MN}). Therefore,
$$\kappa_{\chi,\pi}(b_{\chi(1)}, b_{\chi(2)},\cdots, b_{\chi(n)})=\lim_{N\rightarrow \infty}N^{|\pi|}\kappa_{\chi,\pi}(a_{\chi(1),N,i(1)},a_{\chi(2),N,i(2)},\cdots, a_{\chi(n),N,i(n)}).$$ Especially, by Lemma 2.2,
\begin{align*}
\kappa_\chi(b):=&\kappa_{\chi, 1_n}(b_{\chi(1)}, b_{\chi(2)},\cdots, b_{\chi(n)})\\
=&\lim_{N\rightarrow \infty}N\kappa_{\chi, 1_n}(a_{\chi(1),N,1}, a_{\chi(2),N,1},\cdots, a_{\chi(n),N,1})\\
=&\lim_{N\rightarrow \infty}N\varphi(a_{\chi(1),N,1} a_{\chi(2),N,1}\cdots a_{\chi(n),N,1}).
\end{align*}
\end{proof}

Without any essential difficulties, we can generalize the above limit theorem to the multidimensional case.

For each $N\in \mathbb{N}$ and $1\le m\le N$, let $a_{N,m}=(a_{l,N,m}^{(i)})_{i\in I}, (a_{r,N,m}^{(j)})_{j\in J})$ be a two-faced family, and $\{a_{N,1}, a_{N,2}, \cdots, a_{N,N}\}$ be a bi-free sequence of identically distributed two-faced families of random variables in a non-commutative probability $(\A,\varphi)$, where $I$ and $J$ are disjoint index sets. It follows that, for $n\ge 1$, $\chi:\{1, 2, \cdots, n\}\rightarrow \{l,r\}$, and $\alpha:\{1, 2, \cdots, n\}\rightarrow I\bigsqcup J$, such that $\chi^{-1}(l)=\alpha^{-1}(I)$,  the moments
$$\varphi (a_{\chi(1), N, m}^{(\alpha(1))}a_{\chi(2), N, m}^{(\alpha(2))}\cdots a_{\chi(n), N, m}^{(\alpha(n))}), 1\le m\le N,$$ are independent of $m$. Let $S_{l, N}^{(i)}=\sum_{m=1}^Na_{l,N,m}^{(i)}$, for $i\in I$, $S_{r, N}^{(j)}=\sum_{m=1}^Na_{r,N,m}^{(j)}$, for $j\in J$,  and $S_N=((S_{l,N}^{(i)})_{i\in I}, (S_{r,N}^{(j)})_{j\in J})$ be the two-faced family of random variables in $(\A,\varphi)$. Then we have the following limit theorem.
\begin{Corollary}The following two statements are equivalent.
\begin{enumerate}
\item There is a two-faced family $b=((b_{l,i})_{i\in I}, (b_{r,j})_{j\in J})$ in a non-commutative probability space $(\B, \phi)$ such that
$$S_N\stackrel{distr}{\rightarrow}b, $$ as $N\rightarrow \infty$. 
\item For each $n\ge 1$, $\chi:\{1, 2, \cdots, n\}\rightarrow \{l,r\}$, and $\alpha:\{1, 2, \cdots, n\}\rightarrow I\bigsqcup J$, such that $\chi^{-1}(l)=\alpha^{-1}(I)$,  the limit
$$\lim_{N\rightarrow \infty}N\varphi (a_{\chi(1), N, 1}^{(\alpha(1))}a_{\chi(2), N, 1}^{(\alpha(2))}\cdots a_{\chi(n), N, 1}^{(\alpha(n))})$$ exists.
\end{enumerate}
If the existence of the limits is assumed, then we have
$$\kappa_{\chi, 1_n}(b_{\chi(1), \alpha(1)},b_{\chi(2), \alpha(2)}, \cdots, b_{\chi(n), \alpha(n)})=\lim_{N\rightarrow \infty}N\varphi (a_{\chi(1), N, 1}^{(\alpha(1))}a_{\chi(2), N, 1}^{(\alpha(2))}\cdots a_{\chi(n), N, 1}^{(\alpha(n))}).$$
\end{Corollary}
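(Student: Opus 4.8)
The plan is to run the proof of Theorem 2.3 essentially verbatim, simply carrying the extra index functions $\alpha\colon\{1,\dots,n\}\to I\sqcup J$ along for the ride. First I would expand a mixed moment of the sums into a sum over index tuples and group the terms according to the partition an index tuple induces:
\begin{align*}
\varphi\bigl(S_{\chi(1),N}^{(\alpha(1))}\cdots S_{\chi(n),N}^{(\alpha(n))}\bigr)
&=\sum_{i(1),\dots,i(n)=1}^{N}\varphi\bigl(a_{\chi(1),N,i(1)}^{(\alpha(1))}\cdots a_{\chi(n),N,i(n)}^{(\alpha(n))}\bigr)\\
&=\sum_{\pi\in\P(n)}N(N-1)\cdots(N-|\pi|+1)\,\varphi\bigl(a_{\chi(1),N,i(1)}^{(\alpha(1))}\cdots a_{\chi(n),N,i(n)}^{(\alpha(n))}\bigr),
\end{align*}
where on the right $\{i(1),\dots,i(n)\}$ is an arbitrary index tuple inducing $\pi$; this is legitimate because $a_{N,1},\dots,a_{N,N}$ are identically distributed, so the moment depends only on $\pi$ (together with $\chi$ and $\alpha$), not on the chosen representative tuple. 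Since $N(N-1)\cdots(N-|\pi|+1)/N^{|\pi|}\to 1$, the sequence $\{S_N\}$ converges in distribution if and only if $\lim_{N\to\infty}N^{|\pi|}\varphi(a_{\chi(1),N,i(1)}^{(\alpha(1))}\cdots a_{\chi(n),N,i(n)}^{(\alpha(n))})$ exists for every $n$, every $\chi$, every compatible $\alpha$, and every $\pi\in\P(n)$.

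Next I would restate Lemmas 2.1 and 2.2 in the present multidimensional setting. Expanding moments by (1.2), using the vanishing of mixed bi-free cumulants (1.3) (which forces only those $\sigma$ with $\sigma\le\pi$ to contribute) together with the fact that for a bi-free family only cumulants all of whose arguments carry a single common index $i$ survive, one gets, exactly as in the one-dimensional case, that the existence of all the limits $\lim_N N^{|\pi|}\varphi(\cdots)$ is equivalent to the existence of the single-index limits $\lim_{N\to\infty}N\varphi(a_{\chi(1),N,1}^{(\alpha(1))}\cdots a_{\chi(n),N,1}^{(\alpha(n))})$, and moreover
\[
\lim_{N\to\infty}N\,\kappa_{\chi,1_n}\bigl(a_{\chi(1),N,1}^{(\alpha(1))},\dots,a_{\chi(n),N,1}^{(\alpha(n))}\bigr)=\lim_{N\to\infty}N\,\varphi\bigl(a_{\chi(1),N,1}^{(\alpha(1))}\cdots a_{\chi(n),N,1}^{(\alpha(n))}\bigr).
\]
This yields the equivalence of (1) and (2).

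For the last assertion, assume the limits exist. Writing each moment of $S_N$ in cumulants via (1.2), using (1.3) to discard every $\sigma\not\le\pi$, and reorganising the resulting double sum over $(\pi,\sigma)$ into a single sum over $\P^{(\chi)}(n)$ precisely as in the proof of Theorem 2.3, I obtain
\[
\phi\bigl(b_{\chi(1),\alpha(1)}\cdots b_{\chi(n),\alpha(n)}\bigr)=\lim_{N\to\infty}\varphi\bigl(S_{\chi(1),N}^{(\alpha(1))}\cdots S_{\chi(n),N}^{(\alpha(n))}\bigr)=\sum_{\pi\in\P^{(\chi)}(n)}\lim_{N\to\infty}N^{|\pi|}\kappa_{\chi,\pi}\bigl(a_{\chi(1),N,i(1)}^{(\alpha(1))},\dots,a_{\chi(n),N,i(n)}^{(\alpha(n))}\bigr).
\]
Since the bi-free cumulants of $b$ are the unique multilinear functionals satisfying (1.2) (Proposition 5.2 in \cite{MN}), matching these two cumulant expansions block by block forces $\kappa_{\chi,\pi}(b_{\chi(1),\alpha(1)},\dots)=\lim_N N^{|\pi|}\kappa_{\chi,\pi}(a_{\chi(1),N,i(1)}^{(\alpha(1))},\dots)$ for every $\pi\in\P^{(\chi)}(n)$; specialising to $\pi=1_n$ and applying the multidimensional Lemma 2.2 gives the displayed formula.

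The only genuine work is the Lemma 2.1 / Lemma 2.2 step: one must verify that the index functions $\alpha$ pass unchanged through the Möbius inversion relating moments and cumulants. Since $\alpha$ plays no role in the partition combinatorics — it merely records which of the variables $b_{l,i}$, $b_{r,j}$ sits in each slot — this is entirely routine, which is why the generalisation can be effected ``without essential difficulties.''
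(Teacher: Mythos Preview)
Your proposal is correct and matches the paper's approach exactly: the paper gives no separate proof of this corollary, stating only that ``without any essential difficulties, we can generalize the above limit theorem to the multidimensional case,'' and what you have written is precisely the routine transcription of the proof of Theorem 2.3 with the extra index map $\alpha$ carried through.
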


As an application of the above limit theorems, we can prove Voiculescu's bi-free central limit theorem (7.9 in \cite{Vo1}) in a special (but most popular) case. First let's recall Voiculescu's centered bi-free Gaussian distributions.
\begin{Definition}[7.3 in \cite{Vo1}]
Let $I$ and $J$ be two disjoint index sets. A two-faced family $z=((z_i)_{i\in I}, (z_j)_{j\in J})$ in a non-commutative probability space $(\A,\varphi)$ has a bi-free centered Gaussian distribution if its cumulants satisfy $\kappa_{\alpha, 1_n}(z)=0$, for all $\alpha:\{1, 2, \cdots, n\}\rightarrow I\bigsqcup J$, and $n\in \mathbb{N}, n\ne 2$.
\end{Definition}
Let's use our limit theorems to prove the bi-free central limit theorem.
\begin{Proposition}[7.9 in \cite{Vo1}] Let $z^{(n)}=((z^{(n)}_i)_{i\in I}, (z_j^{(n)})_{j\in J})$, $n\in \mathbb{N}$,  be a bi-free sequence of identically distributed two-faced families in $(\A,\varphi)$ such that $\varphi(z_k^{(n)})=0,\forall k\in I\bigsqcup J$ and $n\in \mathbb{N}$. Let $S_{N,k}=\frac{1}{\sqrt{N}}\sum_{n=1}^Nz_k^{(n)}$, for $k\in I\bigsqcup J, N\in \mathbb{N}$, and $S_N=((S_{N,i})_{i\in I}, (S_{N,j})_{j\in J})$. Then $$S_N\stackrel{distr}{\rightarrow}b,$$ where $b=((b_i)_{i\in I}, (b_j)_{j\in J})$ is a two-faced family in a non-commutative probability space $(\B, \phi)$ having a bi-free centered Gaussian distribution such that $\kappa(b_kb_l)=\varphi(z_k^{(n)}z_l^{(n)}), \forall k, l\in I\bigsqcup J$, and $n\in \mathbb{N}$.
\end{Proposition}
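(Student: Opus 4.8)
The plan is to recognize the statement as a special case of Corollary 2.4, obtained by feeding the rescaled variables into a triangular array. For each $N\in\mathbb{N}$ and $1\le m\le N$, put $a_{N,m}=\big((\tfrac{1}{\sqrt N}z^{(m)}_i)_{i\in I},(\tfrac{1}{\sqrt N}z^{(m)}_j)_{j\in J}\big)$. First I would observe that $\{a_{N,1},\dots,a_{N,N}\}$ is again a bi-free sequence of identically distributed two-faced families: rescaling each coordinate of $z^{(m)}$ by the fixed nonzero scalar $\tfrac{1}{\sqrt N}$ leaves the unital subalgebras they generate unchanged, so the face-pair of $a_{N,m}$ equals the face-pair of $z^{(m)}$ and bi-freeness is inherited from the bi-free sequence $(z^{(n)})_{n\in\mathbb{N}}$; and since $a_{N,1},\dots,a_{N,N}$ arise from the identically distributed $z^{(1)},\dots,z^{(N)}$ by one and the same substitution $X_k\mapsto\tfrac{1}{\sqrt N}X_k$, their distributions all agree. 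With this array one has $S^{(i)}_{l,N}=\sum_{m=1}^N\tfrac{1}{\sqrt N}z^{(m)}_i=S_{N,i}$ and likewise $S^{(j)}_{r,N}=S_{N,j}$, so the $S_N$ produced by Corollary 2.4 is exactly the $S_N$ of the statement.

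Next I would verify hypothesis (2) of Corollary 2.4 and simultaneously read off the limiting cumulants. Fix $n\ge 1$, $\chi:\{1,\dots,n\}\to\{l,r\}$ and $\alpha:\{1,\dots,n\}\to I\bigsqcup J$ with $\chi^{-1}(l)=\alpha^{-1}(I)$. Because every index $m$ here equals $1$, each operator $a^{(\alpha(k))}_{\chi(k),N,1}$ is just $\tfrac{1}{\sqrt N}z^{(1)}_{\alpha(k)}$, so
$$N\,\varphi\big(a^{(\alpha(1))}_{\chi(1),N,1}\,a^{(\alpha(2))}_{\chi(2),N,1}\cdots a^{(\alpha(n))}_{\chi(n),N,1}\big)=N^{\,1-n/2}\,\varphi\big(z^{(1)}_{\alpha(1)}z^{(1)}_{\alpha(2)}\cdots z^{(1)}_{\alpha(n)}\big).$$
For $n=1$ this equals $N^{1/2}\varphi(z^{(1)}_{\alpha(1)})=0$ by the centering assumption; for $n=2$ it is the constant $\varphi(z^{(1)}_{\alpha(1)}z^{(1)}_{\alpha(2)})$; and for $n\ge 3$ the exponent $1-n/2$ is negative, so the quantity tends to $0$. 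Hence all the limits in Corollary 2.4(2) exist.

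Consequently Corollary 2.4 yields a two-faced family $b=((b_{l,i})_{i\in I},(b_{r,j})_{j\in J})$ in some $(\mathcal B,\phi)$ with $S_N\stackrel{distr}{\rightarrow}b$ and
$$\kappa_{\chi,1_n}\big(b_{\chi(1),\alpha(1)},\dots,b_{\chi(n),\alpha(n)}\big)=\lim_{N\to\infty}N\,\varphi\big(a^{(\alpha(1))}_{\chi(1),N,1}\cdots a^{(\alpha(n))}_{\chi(n),N,1}\big),$$
which by the previous paragraph is $0$ for $n\ne 2$ and $\varphi(z^{(1)}_{\alpha(1)}z^{(1)}_{\alpha(2)})$ for $n=2$. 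Thus $\kappa_{\alpha,1_n}(b)=0$ for every $\alpha$ and every $n\ne 2$, i.e.\ $b$ has a bi-free centered Gaussian distribution in the sense of Definition 2.5, and for $k,l\in I\bigsqcup J$ one gets $\kappa(b_kb_l)=\varphi(z^{(1)}_kz^{(1)}_l)=\varphi(z^{(n)}_kz^{(n)}_l)$, the last equality because the $z^{(n)}$ are identically distributed. I do not anticipate a real obstacle here; the only points requiring care are checking that the rescaling genuinely preserves both bi-freeness and identical distribution (so that Corollary 2.4 applies verbatim), the bookkeeping of the paired index maps $\chi$ and $\alpha$, and the essential use of the centering hypothesis $\varphi(z^{(n)}_k)=0$ to kill the $n=1$ contribution.
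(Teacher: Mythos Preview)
Your proposal is correct and follows essentially the same approach as the paper: feed the rescaled variables $\tfrac{1}{\sqrt{N}}z^{(m)}$ into Corollary~2.4 as a triangular array, compute $N\varphi(\cdots)=N^{1-n/2}\varphi(z^{(1)}_{\alpha(1)}\cdots z^{(1)}_{\alpha(n)})$, and read off that the limit is $\delta_{n,2}\varphi(z^{(1)}_{\alpha(1)}z^{(1)}_{\alpha(2)})$. Your version is in fact more careful than the paper's, which compresses the case analysis into a single $\delta_{n,2}$ without explicitly invoking the centering hypothesis for $n=1$; you correctly isolate that the $n=1$ term $N^{1/2}\varphi(z^{(1)}_{\alpha(1)})$ requires $\varphi(z^{(1)}_{\alpha(1)})=0$ to vanish.
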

\begin{proof} By Corollary 2.4, we need to show that $\lim_{N\rightarrow \infty}N\varphi(\frac{z^{(m)}_{k(1)}}{\sqrt{N}}\frac{z^{(m)}_{k(2)}}{\sqrt{N}}\cdots \frac{z^{(m)}_{k(n)}}{\sqrt{N}})$ exists, for all $k:\{1,2, \cdots, n \}\rightarrow I\bigsqcup J, n\ge 1$, and $m\in \mathbb{N}$. In fact,
\begin{multline*}
\kappa_{k, 1_n}(b)=\lim_{N\rightarrow \infty}N\varphi(\frac{z^{(m)}_{k(1)}}{\sqrt{N}}\frac{z^{(m)}_{k(2)}}{\sqrt{N}}\cdots \frac{z^{(m)}_{k(n)}}{\sqrt{N}})\\
=\lim_{N\rightarrow \infty}\frac{N}{N^{n/2}}\varphi(z^{(m)}_{k(1)}z^{(m)}_{k(2)}\cdots z^{(m)}_{k(n)})=\delta_{n,2}\varphi (z^{(m)}_{k(1)}\cdots z^{(m)}_{k(n)}).
\end{multline*}
\end{proof}
\section{Bi-free infinitely divisible distributions}
 The goal of this section is to define and study {\sl bi-free infinitely divisible distributions} in a more general setting than that in \cite{GHM}: we do not require that the random variable in the left face commute with that in the right face of a two-faced pair of random variables. First let's give the definition.
 \begin{Definition} A two faced pair $a=(a_l, a_r)$ of self-adjoint operators in a $C^*$-probability space $(\A,\varphi)$ has a bi-free infinitely divisible distribution if for each $N\in \mathbb{N}$, there is a bi-free sequence of $N$ identically distributed two-faced pairs $\{(a_{l,N,i}, a_{r,N.i}): i=1, 2, \cdots, N\}$ of self-adjoint operators  in a $C^*$-probability space $(\A_N, \varphi_N)$ such that $$S_N:=(S_{l,N}, S_{r,N}):=(\sum_{i=1}^Na_{l,N,i}, \sum_{i=1}^Na_{r,N,i})$$ in $(\A_N, \varphi_N)$ has a distribution same as that of $a=(a_l, a_r)$ in $(\A, \varphi)$.
 \end{Definition}
 \begin{Lemma}For each $N\in \mathbb{N}$, let $\H_N=\underbrace{\H\oplus\cdots \oplus \H}_{N \text{ copies of }\H}$. For $f, g\in \H, T\in B(\H)$, let $$\widehat{f}=\frac{f\oplus f\oplus \cdots \oplus f}{\sqrt{N}}\in \H_N, \widehat{g}=\frac{g\oplus g\oplus \cdots \oplus g}{\sqrt{N}}\in \H_N, \widehat{T}=T\oplus T\oplus \cdots \oplus T\in B(\H_N).$$
 Then $\widehat{\Delta}:=\{l(\widehat{f}), l(\widehat{g})^*, r(\widehat{f}), r(\widehat{g})^*, \Lambda_l(\widehat{T}), \Lambda_r(\widehat{T}): f, g\in \H, T\in B(\H)\}$ has a distribution in $(B(\F(\H_N)), \tau_{\H_N})$ same as that  of $\Delta:=\{l(f), r(f), l(g)^*, r(g)^*, \Lambda_l(T), \Lambda_r(T):f, g\in \H, T\in B(\H)\}$  in $(B(\F(\H),\tau_\H)$.
 \end{Lemma}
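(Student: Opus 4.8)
The plan is to compute both distributions (the one generated by $\widehat\Delta$ on the big Fock space $\F(\H_N)$ and the one generated by $\Delta$ on $\F(\H)$) in terms of bi-free cumulants, using the explicit operator model, and to check that the cumulants agree. The point of introducing the $\frac{1}{\sqrt N}$ normalization in $\widehat f,\widehat g$ together with the $N$-fold direct sum is exactly to make a ``central-limit-type'' cancellation occur, so the cumulant computation should be the cleanest route. First I would recall from the full Fock space model the known formulas for the mixed moments $\tau_\H$ of words in $l(f), l(g)^*, r(f), r(g)^*, \Lambda_l(T), \Lambda_r(T)$: only those words survive that can be reduced to $\langle\Omega,\Omega\rangle$ by the annihilation/creation bookkeeping, which forces a strong ``pairing/nesting'' structure. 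Equivalently, one knows that for this particular family the bi-free cumulants $\kappa_\chi$ vanish except in a restricted set of small arguments (the nonzero ones being $\kappa$ of $(l(g)^*, l(f))$-type pairs, of $(\Lambda_l(T))$-singletons composed appropriately with a creation and an annihilation, etc.), with values given by inner products $\langle f, g\rangle$ and by $T$ evaluated on the relevant vector. I would state this as the reduction ``it suffices to match bi-free cumulants of all orders.''

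Next I would exploit the direct-sum structure. Write $\H_N=\bigoplus_{k=1}^N\H^{(k)}$ with each $\H^{(k)}\cong\H$. Since $\widehat T=\bigoplus_k T$ preserves each summand and $\widehat f=\frac1{\sqrt N}\sum_k f^{(k)}$, one has $l(\widehat f)=\frac1{\sqrt N}\sum_k l(f^{(k)})$, $\Lambda_l(\widehat T)=\sum_k\Lambda_l^{(k)}$ (the operator that acts by $T$ on the first tensor leg when that leg lies in $\H^{(k)}$), and similarly for the right creations/annihilations. Now invoke Proposition 1.(the bi-freeness proposition, Remark 3.5 in \cite{GHM}): the pairs of faces $(\B_k,\C_k)$ associated to the summands $\H^{(k)}$ are bi-free in $(B(\F(\H_N)),\tau_{\H_N})$, and each generator of $\widehat\Delta$ is a sum over $k$ of the ``$k$-th copy'' of the corresponding generator of $\Delta$, with a scalar $\frac1{\sqrt N}$ attached to each creation and each annihilation but no scalar on $\Lambda_l(\widehat T),\Lambda_r(\widehat T)$. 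By bi-free additivity of cumulants over bi-free families and by the identical distribution of the $N$ copies, the cumulant of a length-$n$ word in $\widehat\Delta$ equals $N$ times the cumulant of the corresponding length-$n$ word in $\Delta$ (one copy), multiplied by $N^{-(\#\text{creations}+\#\text{annihilations})/2}$. One then checks that a word contributes a nonzero $\Delta$-cumulant only when the number of creation symbols plus the number of annihilation symbols appearing in it equals exactly $2$ — this is where the Fock-space structure is used — so the scale factor is precisely $N\cdot N^{-1}=1$ on all surviving cumulants, and $N\cdot N^{-j/2}\to$ (well, it is exactly $0$ already) whenever that count is $0$, and the count can never be $1$ by a parity/reduction argument. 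Hence every $\widehat\Delta$-cumulant equals the corresponding $\Delta$-cumulant, and by equation (1.2) the two distributions coincide.

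I expect the main obstacle to be the bookkeeping in the last step: making precise the claim that, among all words in $l(f),l(g)^*,r(f),r(g)^*,\Lambda_l(T),\Lambda_r(T)$, the only ones with a nonzero bi-free cumulant are those in which creation and annihilation symbols together occur exactly twice (so that the $N$ from bi-free additivity is cancelled by exactly two factors of $\frac1{\sqrt N}$). This requires carefully going through the structure of $\mathcal P^{(\chi)}(n)$ and the action on the Fock space: a creation raises the tensor degree by one, an annihilation lowers it, and $\Lambda_l,\Lambda_r$ preserve it but kill $\Omega$, so a surviving ``block'' in a bi-free cumulant must be an irreducible such word, and irreducibility forces the length and the creation/annihilation count to be small; the degree-counting has to be reconciled with the left/right interleaving encoded by $s_\chi$. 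An alternative, perhaps cleaner, route that sidesteps some of this is to observe that $\widehat f\mapsto f$, $\widehat g\mapsto g$, $\widehat T\mapsto T$ essentially realizes the ``reduced'' model and to quote the scaling behavior of the relevant bi-free/free convolution semigroups directly, but I would prefer the self-contained cumulant computation above. Once the cumulant identity is established, equality of distributions in $(B(\F(\H_N)),\tau_{\H_N})$ versus $(B(\F(\H)),\tau_\H)$ is immediate from (1.2), completing the proof.
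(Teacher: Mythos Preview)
Your approach is genuinely different from the paper's. The paper proves the lemma by a direct moment computation: it observes that $\tau_\H(b_0\cdots b_{n+1})\ne 0$ forces $b_0$ to be an annihilation and $b_{n+1}$ a creation, reduces to showing $\langle b_1\cdots b_n f,g\rangle=\langle \widehat b_1\cdots \widehat b_n\widehat f,\widehat g\rangle$, and then runs an induction on the number $k$ of creation/annihilation pairs among $b_1,\dots,b_n$, with a case analysis (six cases) on the type of the innermost annihilation--creation pair. No cumulants enter; it is pure Fock-space bookkeeping with the commutation $l(\xi)r(\zeta)=r(\zeta)l(\xi)$.

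Your route---decompose $\widehat b=\sum_k b^{(k)}$, use Proposition~1.1 for bi-freeness of the copies, and invoke vanishing of mixed bi-free cumulants to get $\kappa_\chi(\widehat b)=N\cdot N^{-c/2}\kappa_\chi(b)$ with $c$ the number of creation/annihilation letters---is correct and conceptually appealing: it explains the $1/\sqrt N$ normalization as the scaling of a bi-free convolution power. However, be aware of a circularity risk in the paper's own logical order. The statement you identify as the ``main obstacle'' (nonzero $\Delta$-cumulants occur only when $c=2$, with the annihilation first, the creation last, and gauges in between) is exactly Corollary~3.3(1), and the paper \emph{derives} Corollary~3.3(1) from Lemma~3.2 together with the limit theorem (Corollary~2.4). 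So within this paper you cannot cite that corollary; you must establish the cumulant structure of $\{l(f),l(g)^*,r(f),r(g)^*,\Lambda_l(T),\Lambda_r(T)\}$ from scratch via the bi-non-crossing combinatorics (or cite it from \cite{MN} or \cite{CNS1} if it is actually there). The easy cases $c\in\{0,1\}$ follow from $\Lambda(T)\Omega=0$ and a degree count, but $c\ge 3$ (equivalently $c\ge 4$ after the parity constraint) needs a genuine Möbius-inversion argument over $\mathcal P^{(\chi)}(n)$; your boundedness remark only kills $c<2$, not $c>2$. Done independently, your argument works and is arguably cleaner, but it is not shorter than the paper's induction---the work has simply been relocated into proving Corollary~3.3(1) directly.
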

\begin{proof}
For any $b\in \Delta$, $\tau_\H(b)=0=\tau_{\H_N}(\widehat{b})$. For $b_0, b_1,\cdots b_n, b_{n+1}\in \Delta, n\ge 0$, $\tau_\H(b_0\cdots b_{n+1})\ne 0$ implies that $b_0=\chi(g)^*$, $b_{n+1}=\chi(f)$, $f, g\in \H, \chi\in \{l,r\}$, and $b_1\cdots b_n f=f_0\in \H$. Then $$\tau_\H(b_0\cdots b_{n+1})=\langle b_1\cdots b_n f,g\rangle.$$ Very similarly, $\tau_{\H_N}(\widehat{b}_0\cdots \widehat{b}_{n+1})\ne 0$ implies that $\tau_{\H_N}(\widehat{b}_0\cdots \widehat{b}_{n+1})=\langle \widehat{b}_1\cdots \widehat{b}_n \widehat{f},\widehat{g}\rangle.$

Since $b_1\cdots b_nf\in \H$, there are $k$ operators $\{\chi(i)(f_i): f_i\in \H, \chi(i)\in \{l,r\}, i=1, 2, \cdots, k\}$ and $k$ operators $\{\chi(i)(f_i)^*: f_i\in \H, \chi(i)\in \{l,r\}, i=1, 2, \cdots, k\}$ among $\{b_1, b_2, \cdots, b_n\}$, $2k\le n$.
Let's prove $$\langle b_1\cdots b_n f,g\rangle=\langle \widehat{b}_1\cdots \widehat{b}_n \widehat{f},\widehat{g}\rangle. \eqno (3.1)$$
by induction in $k$.

When $k=0$, it is obvious that (3.1) holds true, because $b_1, \cdots, b_n$ are $\Lambda_\chi (T)$, $T\in B(\H), \chi \in \{l,r\}$.
When $k=1$, after performing actions of $\Lambda_\chi (T), T\in B(\H)$, $\chi\in \{l,r\}$, we can assume that  there is no $\Lambda_\chi(T)$ in $\{b_1, \cdots, b_n\}$. Therefore, $n=2$.

 Case I. $b_1=l(f_1)^*,  b_2=l(f_2)$. or $b_1=r(f_1)^*, b_2=r(f_2)$. In this case, we have $$\langle b_1b_2f,g\rangle=\langle f_2,f_1\rangle\langle f,g\rangle=\langle \widehat{f}_2,\widehat{f}_1\rangle\langle \widehat{f},\widehat{g}\rangle=\langle \widehat{b}_1\widehat{b}_2\widehat{f},\widehat{g}\rangle.$$
 Case II. $b_1=l(f_1)^*, b_2=r(f_2) $.   $$\langle b_1 b_2f,g\rangle=\langle f,f_1\rangle \langle f_2,g\rangle=\langle \widehat{f},\widehat{f}_1\rangle \langle \widehat{f_2},\widehat{g}\rangle=\langle \widehat{b}_1  \widehat{b}_2\widehat{f}, \widehat{g}\rangle.$$
 Case III. $b_1=r(f_1)^*, b_2=l(f_2) $. The discussion is same as that in case II.

Suppose $(3.1)$ holds true for $k< K$. Now we consider the case that  $k=K$. After performing actions of $\Lambda_\chi(T)$, $T\in B(\H), \chi\in \{\l,r\}$, we can  assume that $b_1,\cdots,  b_n $ are creation or annihilation operators. Choose the largest index $i$ such that  $b_i=l(f_i)^*$ or $r(f_i)^*$, and $b_{i+1}=l(f_{i+1})$ or $r(f_{i+1})$. Then $b_j=\chi(f_j), j=i+2, \cdots, n, \chi\in \{l,r\}$.

Case I. $b_i=l(f_i)^*, b_{i+1}=l(f_{i+1})$. Since $l(\xi)r(\zeta)=r(\zeta)l(\xi)$, for all $\xi, \zeta\in \H$, we can write $b_{i+1}\cdots b_n=l(f_{i+1})l(\xi_1)\cdots l(\xi_p)r(\zeta_q)\cdots r(\zeta_1)$, $p+q=n-i-1$. Then,  by the inductive hypothesis, we have
\begin{multline*}
\langle b_1\cdots b_nf,g\rangle=\langle f_{i+1}, f_i\rangle \langle b_1\cdots b_{i-1} b_{i+2}\cdots b_nf,g\rangle\\
=\langle \widehat{f}_{i+1}, \widehat{f}_i\rangle \langle \widehat{b}_1\cdots \widehat{b}_{i-1} \widehat{b}_{i+2}\cdots \widehat{b}_n\widehat{f},\widehat{g}\rangle=\langle \widehat{b}_{1}\cdots \widehat{b}_n\widehat{f},\widehat{g}\rangle.
\end{multline*}

Case II. $b_i=r(f_i)^*, b_{i+1}=l(f_{i+1})$, and $q>0$.
   By inductive hypothesis, we have
\begin{align*}
\langle b_1\cdots b_nf,g\rangle=&\langle b_1\cdots b_{i-1}r(f_i)^*l(f_{i+1})l(\xi_1)\cdots l(\xi_p) r(\zeta_q)r(\zeta_{q-1})\cdots r(\zeta_1)f,g\rangle\\
=&\langle b_1\cdots b_{i-1}l(f_{i+1})l(\xi_1)\cdots l(\xi_p)r(f_i)^* r(\zeta_q)r(\zeta_{q-1})\cdots r(\zeta_1)f,g\rangle\\
=&\langle \zeta_q,f_i\rangle \langle b_1\cdots b_{i-1}l(f_{i+1})l(\xi_1)\cdots l(\xi_p)r(\zeta_{q-1})\cdots r(\zeta_1)f,g\rangle\\
=&\langle \widehat{\zeta}_q,\widehat{f}_i\rangle \langle \widehat{b}_1\cdots \widehat{b}_{i-1}l(\widehat{f}_{i+1}) l(\widehat{\xi}_1)\cdots l(\widehat{\xi}_p)r(\widehat{\zeta}_{q-1})\cdots r(\widehat{\zeta_1})\widehat{f},\widehat{g}\rangle\\
=&\langle \widehat{b}_1\cdots \widehat{b}_{i-1}l(\widehat{f}_{i+1}) l(\widehat{\xi}_1)\cdots l(\widehat{\xi}_p)r(\widehat{f}_i)^*r(\widehat{\zeta}_q)r(\widehat{\zeta}_{q-1})\cdots r(\widehat{\zeta_1})\widehat{f},\widehat{g}\rangle\\
=&\langle \widehat{b}_1\cdots \widehat{b}_{i-1}r(\widehat{f}_i)^*l(\widehat{f}_{i+1})l(\widehat{\xi}_1)\cdots l(\widehat{\xi}_p)r(\widehat{\zeta}_{q})\cdots r(\widehat{\zeta_1})\widehat{f},\widehat{g}\rangle=\langle \widehat{b}_{1}\cdots \widehat{b}_n\widehat{f},\widehat{g}\rangle.
\end{align*}

Case III. $b_i=r(f_i)^*, b_{i+1}=l(f_{i+1})$, and $q=0$.
\begin{align*}
\langle b_1\cdots b_nf,g\rangle=&\langle b_1\cdots b_{i-1}r(f_i)^*l(f_{i+1})l(\xi_1)\cdots l(\xi_p) f,g\rangle\\
=&\langle b_1\cdots b_{i-1}l(f_{i+1})l(\xi_1)\cdots l(\xi_{p-1})[l(\xi_p)r(f_i)^* f],g\rangle\\
=&\langle f,f_i\rangle \langle b_1\cdots b_{i-1}l(f_{i+1})l(\xi_1)\cdots l(\xi_{p-1})\xi_p,g\rangle\\
=&\langle \widehat{f},\widehat{f}_i\rangle \langle \widehat{b}_1\cdots \widehat{b}_{i-1}l(\widehat{f}_{i+1}) l(\widehat{\xi}_1)\cdots l(\widehat{\xi}_{p-1})\widehat{\xi_p},\widehat{g}\rangle\\
=&\langle \widehat{b}_1\cdots \widehat{b}_{i-1}l(\widehat{f}_{i+1})l(\widehat{\xi}_1)\cdots l(\widehat{\xi}_{p-1})[l(\widehat{\xi}_p)r(\widehat{f}_i)^* \widehat{f}],\widehat{g}\rangle\\
=&\langle \widehat{b}_1\cdots \widehat{b}_{i-1}r(\widehat{f}_i)^*l(\widehat{f}_{i+1})l(\widehat{\xi}_1)\cdots l(\widehat{\xi}_p)\widehat{f},\widehat{g}\rangle=\langle \widehat{b}_{1}\cdots \widehat{b}_n\widehat{f},\widehat{g}\rangle.
\end{align*}

Case IV. $b_i=r(f_i)^*, b_{i+1}=r(f_{i+1})$. The proof is same as that for Case I.

Case V. $b_i=l(f_i)^*, b_{i+1}=r(f_{i+1})$, and $p=0$. The proof is same as that for Case III.

Case VI.  $b_i=l(f_i)^*, b_{i+1}=r(f_{i+1})$, and $p>0$. The proof is same as that for Case II.
\end{proof}

The following corollary provides a particular type of bi-free infinitely divisible distributions.
 \begin{Corollary} For $f, g\in \H, \lambda_1, \lambda_2 \in \mathbb{C}$, and $T_1, T_2\in B(\H)$,  we have the following conclusions.
\begin{enumerate}
\item Let  $b=((b_{l,1}, b_{l,2}, b_{l,3}), (b_{r,1}, b_{r,2}, b_{r,3}))=((l(f), l(g)^*, \Lambda_l(T_1)), (r(f), r(g)^*, \Lambda_r(T_2)))$ and $B$ the set of all operators in $b$. Then the bi-free cumulants of elements in $B$ have the following form. Let $\chi$ be a map from $\{1, 2, \cdots, n\}$ into $\{l,r\}$, for $n\in \mathbb{N}$. Then  $\kappa_\chi(b_i)=0, \forall b_i\in B$. For $n= 2$, the cumulant $\kappa_{\chi}(b_1,b_2)\ne 0$ only if $b_1=l(g)^*, \text{or } r(g)^*$, and $b_2=l(f), \text{or } r(f)$. In this case, $$\kappa_\chi(b_1, b_2)=\langle f,g\rangle.$$ For $n>2$, $\kappa_\chi(b_1\cdots b_n)\ne 0$ only if $b_1=l(g)^*, \text{or } r(g)^*$, $b_n=l(f), \text{or } r(f)$, and $b_2, b_3, \cdots, b_{n-1}$ are $\Lambda_{h}(T_i)$, $i=1, 2$, and $h\in \{l,r\}$. In this case, $$\kappa_\chi(b_1, b_2, \cdots, b_n)=\langle b_2\cdots b_{n-1}f,g\rangle.$$
\item Let $T_1, T_2 \in \mathcal{B}(\H)$ be self adjoint, and $\lambda_1, \lambda_2 \in \mathbb{R}$. Then the two-faced pair $a=(a_l, a_r):=(l(f)+l(f)^*+\Lambda_l(T_1)+\lambda_1 1, r(g)+r(g)^*+\Lambda_r(T_2)+\lambda_2 1)$ has a bi-free infinitely divisible distribution.
\end{enumerate}
\end{Corollary}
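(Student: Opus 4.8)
The plan is to deduce both parts from the full Fock space computations already set up in Lemma~3.2 and the characterization of bi-free cumulants. For part~(1), I would compute $\kappa_\chi(b_{\chi(1)},\dots,b_{\chi(n)})$ directly from the moment--cumulant relation $(1.2)$ together with the known action of $l(f)$, $r(f)$, $l(g)^*$, $r(g)^*$, $\Lambda_l(T_1)$, $\Lambda_r(T_2)$ on $\F(\H)$. The key observation is that for these specific operators on the full Fock space, a moment $\tau_\H(c_1\cdots c_m)$ is nonzero only when the word is ``balanced'' in a very rigid way: reading left to right, the word must begin with an adjoint of a creation operator, end with a creation operator, and the middle must be a single unbroken block that produces a vector in $\H$; this forces the combinatorics of any contributing partition in $\P^{(\chi)}(m)$ to be the ``full'' interval partition on each block. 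Feeding this into $(1.1)$ and proceeding by induction on the length $n$ (peeling off the outermost pair $b_1=\chi(g)^*$, $b_n=\chi(f)$), one sees that all lower-order cumulant terms in the moment--cumulant expansion vanish except $\kappa_{\chi,1_n}$ itself, so $\kappa_\chi(b_1,\dots,b_n)=\tau_\H(b_1\cdots b_n)$, which by the explicit Fock action equals $0$ for $n=1$, equals $\langle f,g\rangle$ for $n=2$ in the stated cases, and equals $\langle b_2\cdots b_{n-1}f,g\rangle$ for $n>2$ when the interior entries are gauge operators $\Lambda_h(T_i)$. The bi-freeness of $\{(\B_i,\C_i)\}$ from Proposition~1.1 (equivalently, the vanishing of mixed cumulants $(1.3)$) is what guarantees that only the single-index cumulants survive, so no cross terms appear.

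For part~(2), I would first reduce to the self-adjoint case: since $T_1,T_2$ are self-adjoint and $\lambda_1,\lambda_2\in\RRR$, the operators $a_l=l(f)+l(f)^*+\Lambda_l(T_1)+\lambda_1 1$ and $a_r=r(g)+r(g)^*+\Lambda_r(T_2)+\lambda_2 1$ are genuinely self-adjoint elements of the $C^*$-probability space $(B(\F(\H)),\tau_\H)$. To show $(a_l,a_r)$ has a bi-free infinitely divisible distribution in the sense of Definition~3.1, I fix $N\in\NNN$ and must produce a bi-free sequence of $N$ identically distributed self-adjoint two-faced pairs summing (in distribution) to $(a_l,a_r)$. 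The natural candidate is the one built on $\F(\H_N)$ with $\H_N=\H\oplus\cdots\oplus\H$ ($N$ copies): for $m=1,\dots,N$ let $(a_{l,N,m},a_{r,N,m})$ be the pair obtained by letting the creation/annihilation/gauge operators act only in the $m$-th copy of $\H$ inside $\H_N$, suitably normalized so that $l(\hat f)$ from Lemma~3.2 decomposes as $\frac{1}{\sqrt N}\sum_{m=1}^N l(f^{(m)})$ where $f^{(m)}$ sits in the $m$-th summand. By Proposition~1.1 these $N$ pairs are bi-free (they live in the mutually bi-free pairs of faces associated to the orthogonal summands $\H^{(m)}$ of $\H_N$), and by the symmetry among the $N$ copies they are identically distributed. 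Their sum $S_N=(S_{l,N},S_{r,N})$ is exactly the two-faced pair $(l(\hat f)+l(\hat f)^*+\Lambda_l(\hat T_1)+\lambda_1 1,\ r(\hat g)+r(\hat g)^*+\Lambda_r(\hat T_2)+\lambda_2 1)$ on $\F(\H_N)$, where the hats are as in Lemma~3.2. Lemma~3.2 then says precisely that this $S_N$ has the same distribution in $(B(\F(\H_N)),\tau_{\H_N})$ as $(a_l,a_r)$ has in $(B(\F(\H)),\tau_\H)$, which is what Definition~3.1 requires.

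I expect the main obstacle to be part~(1): verifying that the moment--cumulant inversion really does collapse to $\kappa_\chi=\tau_\H$ for these operators requires a careful bookkeeping of which partitions $\sigma\in\P^{(\chi)}(n)$ can contribute a nonzero $\varphi_\sigma$, and one has to confirm that the permutation $s_\chi$ and the block structure forced by the Fock action are compatible in the way claimed — essentially re-running a structured version of the computation that appears in the proof of Lemma~3.2, but at the level of cumulants rather than moments. Part~(2) is then essentially a packaging step, though one should take care to note that the decomposition of $\hat f$, $\hat g$, $\hat T_1$, $\hat T_2$ into $N$ summands is exactly the one making the $N$ pairs bi-free via Proposition~1.1, and that self-adjointness is preserved under this decomposition so that Definition~3.1's requirement of self-adjoint pairs in a $C^*$-probability space is met.
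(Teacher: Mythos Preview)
Your approach to part~(2) is essentially the paper's: split $\H_N$ into $N$ orthogonal copies, invoke Proposition~1.1 for bi-freeness of the resulting $N$ pairs, and use Lemma~3.2 to identify the distribution of the sum with that of $a$. The paper also reduces to $\lambda_1=\lambda_2=0$ at the outset, but otherwise the argument is the same.

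Part~(1), however, is handled quite differently in the paper, and your proposal there has a loose end. You propose a direct M\"obius inversion on $\F(\H)$, arguing that for the specific words in question only $\pi=1_n$ contributes to $(1.2)$; you also invoke Proposition~1.1 and the vanishing of mixed cumulants $(1.3)$. But on the single space $\H$ there is no bi-free family in sight, so Proposition~1.1 gives you nothing here---that reference is misplaced. The direct inversion can be made to work, but it requires a genuine case analysis over $\P^{(\chi)}(n)$ that you have only gestured at, and your own closing paragraph concedes this is the unfinished part.

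The paper instead \emph{reuses the part~(2) construction to prove part~(1)}, via Corollary~2.4. Since $b$ has the same distribution as $\widehat b$, and $\widehat b$ is the bi-free sum of $N$ identically distributed families each with the distribution of $\bigl(\tfrac{l(f)}{\sqrt N},\tfrac{l(g)^*}{\sqrt N},\Lambda_l(T_1),\tfrac{r(f)}{\sqrt N},\tfrac{r(g)^*}{\sqrt N},\Lambda_r(T_2)\bigr)$, Corollary~2.4 gives
\[
\kappa_\chi(b_1,\dots,b_n)=\lim_{N\to\infty}N\,\tau_\H(b_{N,1}\cdots b_{N,n}).
\]
Each creation or annihilation factor contributes a $1/\sqrt N$, while each $\Lambda_h(T_i)$ is unscaled; hence $N\,\tau_\H(b_{N,1}\cdots b_{N,n})=N^{1-k/2}\tau_\H(b_1\cdots b_n)$ where $k$ counts creation/annihilation factors. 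The limit survives only when $k=2$, and since a nonzero vacuum expectation forces one annihilation on the far left and one creation on the far right, the stated form of the cumulants drops out immediately. This scaling trick replaces your entire inductive M\"obius argument with a one-line limit, and it is where Proposition~1.1 actually enters (to justify applying Corollary~2.4 on $\H_N$).
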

\begin{proof}
(1).  For $N\in \mathbb{N}$, by Lemma 3.2, $b$ has a distribution same as $$\widehat{b}:=((l(\widehat{f}), l(\widehat{g})^*, \Lambda_l(\widehat{T}_1)), (r(\widehat{f}), r(\widehat{g})^*, \Lambda_r(\widehat{T}_2))).$$ Furthermore, $\widehat{b}$ is the bi-free sum of $\widehat{b}_1+\cdots \widehat{b}_N$ , where $$\widehat{b}_i=((l(\widehat{f}_i), l(\widehat{g}_i)^*, \Lambda_l((\widehat{T}_1)_i), (r(\widehat{f}_i), r(\widehat{g}_i)^*, \Lambda_r((\widehat{T}_2)_i)), \widehat{f}_i=\frac{0\oplus \cdots\oplus 0\oplus f\oplus 0\oplus\cdots \oplus 0}{\sqrt{N}},$$ the $i$-th component of $\widehat{f}$. All summands have the joint distribution of $$((\frac{l(f)}{\sqrt{N}}, \frac{l(g)^*}{\sqrt{N}}, \Lambda_l(T_1)), (\frac{r(f)}{\sqrt{N}}, \frac{r(g)^*}{\sqrt{N}}, \Lambda_r(T_2))).$$
For $b_i\in B$, let $b_{N,i}$ be the corresponding element in $$B_N:=\{\frac{l(f)}{\sqrt{N}}, \frac{l(g)^*}{\sqrt{N}}, \Lambda_l(T_1), \frac{r(f)}{\sqrt{N}}, \frac{r(g)^*}{\sqrt{N}}, \Lambda_r(T_2)\}.$$ For $b_{1}, b_{2}, \cdots, b_{n} \in B$, we have
$\lim_{N\rightarrow \infty}N\varphi_N(b_{N,1}\cdots b_{N,n})\ne 0$ implies that $b_{N,n}=l(f)/\sqrt{N}$ or $b_{N,n}=r(f)/\sqrt{N}$, and $b_{N,1}=l(g)^*/\sqrt{N}$ or $b_{N,1}=r(g)^*/\sqrt{N}$, and others $b_{N,2}, \cdots b_{N,n-1}$ are $\Lambda_{h}(T_i)$, $i=1, 2, h\in \{l,r\}$. In this case, by Corollary 2.4, we have
$$\kappa_\chi(b_1,\cdots, b_n)=\langle b_2b_3\cdots b_{n-1}f,g\rangle. $$

(2). Without loss of generality, we can assume $\lambda_1=\lambda_2=0$. For any $N\in \mathbb{N}$, by Lemma 3.2, $a$ and $\widehat{a}=(l(\widehat{f})+l(\widehat{f})^*+\Lambda_l(\widehat{T_1}), r(\widehat{g})+r(\widehat{g})^*+\Lambda_r(\widehat{T_2}))$ have the same distribution. Moreover, let $$\widehat{a}_i=(l(\widehat{f}_i)+l(\widehat{f}_i)^*+\Lambda_l(\widehat{T_1}_i), r(\widehat{g}_i)+l(\widehat{g}_i)^*+\Lambda_r(\widehat{T_2}_i)),$$ where $\widehat{f}_i, \widehat{g}_i, \widehat{T}_i$ are the i-th summands of the direct sum vectors $\widehat{f}, \widehat{g}$, and operator $\widehat{T}$, respectively. Then by Proposition 1.1,
 $\widehat{a}_1, \widehat{a}_2, \cdots \widehat{a}_N$ are bi-free. It is obvious that $\widehat{a}=\widehat{a}_1+\cdots +\widehat{a}_N$. Hence, $a$ has a bi-free infinitely divisible distribution.
\end{proof}

The following result is a corollary of Proposition 6.4.1 in \cite{CNS2}.
\begin{Lemma} Let $a_{1}, a_2, \cdots a_n$ be $n$ random variables in a non-commutative probability space $(\A,\varphi)$, and $n\ge 2$. If  $a_i=1$,  for some $i$,  $1\le i\le n$, then for every $\chi:\{1, 2, \cdots, n\}\rightarrow \{l,r\}$, $\pi\in \P^{(\chi)}(n)$ and $\{1\}$ is not a block of $\pi$, $\kappa_{\chi,\pi}(a_1, \cdots, a_n)=0$.
\end{Lemma}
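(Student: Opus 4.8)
The plan is to reduce the claim for an arbitrary $\pi\in\P^{(\chi)}(n)$ to the single-block case $\pi=1_m$, which is the scalar-valued specialization of Proposition 6.4.1 in \cite{CNS2}: for every $m\ge2$, every $\eta:\{1,\dots,m\}\to\{l,r\}$, and all $c_1,\dots,c_m\in\A$ with $c_k=1$ for some $k$, one has $\kappa_{\eta,1_m}(c_1,\dots,c_m)=0$. (A self-contained argument is also available: by multilinearity of the cumulants one reduces to $c_k=1$; if \emph{all} $c_i=1$ then $1=\varphi(1\cdots1)=\sum_{\sigma\in\P^{(\eta)}(m)}\kappa_{\eta,\sigma}(1,\dots,1)$, and induction on $m$ via the product formula for $\kappa_{\eta,\sigma}$ isolates $\kappa_{\eta,1_m}(1,\dots,1)=0$; if some $c_i\ne1$, view the entry equal to $1$ as living in the trivial pair of faces $(\cc1,\cc1)$ and the other entries in the pair of algebras they generate, so that these pairs are bi-free and $(1.3)$ applies with a non-constant $\epsilon$.)

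Granting this, fix $\chi:\{1,\dots,n\}\to\{l,r\}$ and $\pi\in\P^{(\chi)}(n)$ such that the singleton $\{i\}$ is not a block of $\pi$, where $a_i=1$. Let $V\in\pi$ be the block containing $i$; since $\{i\}\notin\pi$ we have $|V|\ge2$. Write $V=\{i_1<i_2<\dots<i_k\}$ with $k\ge2$ and $i=i_s$ for the appropriate index $s$. By the definition of the restricted cumulant,
$$\kappa_{\chi,V}(a_1,\dots,a_n)=\kappa_{\chi_V}(a_{i_1},\dots,a_{i_k}),$$
where $\chi_V$ is the restriction of $\chi$ to $V$ (a valid left/right pattern on the $k$ positions of $V$) and one of its $k\ge2$ arguments, namely $a_{i_s}=a_i=1$, equals the unit. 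By the fact recalled above, this vanishes.

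Finally, since $\kappa_{\chi,\pi}(a_1,\dots,a_n)=\prod_{W\in\pi}\kappa_{\chi,W}(a_1,\dots,a_n)$ is a product over the blocks of $\pi$ and the factor indexed by $V$ equals $0$, the whole product vanishes, which is the assertion. The argument is pure bookkeeping once Proposition 6.4.1 of \cite{CNS2} is invoked; the only point requiring attention is to check that the reduced cumulant $\kappa_{\chi_V}(a_{i_1},\dots,a_{i_k})$ really is of the form covered by that proposition, i.e.\ that it has at least two arguments and that the unit $a_i$ is among them, both of which are immediate from $|V|\ge2$ and $i\in V$. There is no analytic difficulty.
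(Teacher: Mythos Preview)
Your proposal is correct and matches the paper's approach: the paper gives no proof beyond declaring the lemma a corollary of Proposition~6.4.1 in \cite{CNS2}, and your argument is precisely the natural way to unpack that corollary---use the multiplicativity $\kappa_{\chi,\pi}=\prod_{W\in\pi}\kappa_{\chi,W}$ to isolate the block $V\ni i$ with $|V|\ge2$, and then apply the cited proposition to that block. Your parenthetical self-contained argument (induction for the all-ones case, bi-freeness of the scalars for the mixed case) is a pleasant bonus, though the bi-freeness half would need a small remark to fit the exact indexing hypotheses of $(1.3)$.
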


\begin{Definition} Let $(\A,\varphi)$ be a non-commutative probability space, $\Upsilon:=\{\kappa_{\chi}:\chi:\{1,2,\cdots, n\}\rightarrow \{l,r\}, n\ge 1\}$ be the set of all bi-free cumulant polynomials of $(\A, \varphi)$. We say that the cumulant set of a two-faced pair $a=(a_l, a_r)$ is conditionally non-negative definite if for every sequence $\chi_i:\{1, 2, \cdots, i\}\rightarrow \{l,r\}, i=1, 2, \cdots, k$, in $\Upsilon$, and $\alpha_1, \alpha_2, \cdots, \alpha_k\in \mathbb{C}$, $$\sum_{n,m=1}^k\alpha_n\overline{\alpha_m}\kappa_{\chi_n\sqcup\chi_m}(a)\ge 0,$$
where $\kappa_{\chi_{n}\sqcup \chi_{m}}(a)=\kappa_{\chi_n\sqcup\chi_m}(a_{\chi_n(1)},\cdots, a_{\chi_{n}(n)}, a_{\chi_{m}(m)}, \cdots, a_{\chi_m(1)})$, $$\chi_n\sqcup \chi_m:\{1, 2, \cdots, n+m\}\rightarrow \{l,r\},$$
\begin{equation*}\chi_n\sqcup \chi_m(i)=
\begin{cases} \chi_n(i), & \text{if } 1\le i\le n;\\
\chi_m(m+n-i+1), & \text{if } n<i\le n+m.
\end{cases}
\end{equation*}
\end{Definition}
\begin{Definition} Let $a=(a_l,a_r)$ be a pair of two self-adjoint operators in a $C^*$-probability space $(\A,\varphi)$. We say $\Upsilon:=\{\kappa_{\chi}(a):\chi:\{1,2,\cdots, n\}\rightarrow \{l,r\}, n\ge 1\}$ is conditionally bounded if for $\lambda\in \{l,r\}$, there exists a positive number $L$ such that $$\sum_{n, m=1}^k\alpha_n\overline{\alpha_m}\kappa_{(\chi_n\cup\lambda)\sqcup(\chi_m\cup\lambda)}(a_{\chi_n(1)}, \cdots, a_{\chi_n(n)}, a_{\lambda}, a_{\lambda}, a_{\chi_m(m)}, \cdots, a_{\chi_m(1)})\le L\sum_{n, m=1}^k\alpha_n\overline{\alpha_m}\kappa_{\chi_n\sqcup\chi_m}(a),$$ $\forall \chi_n:\{1, 2, \cdots, n\}\rightarrow \{l,r\}, \alpha_n\in \mathbb{C}, n=1, 2, \cdots,k,  k\ge 1$.
\end{Definition}
\begin{Theorem} Let $a=(a_l, a_r)$ be a two-faced pair of self-adjoint operators in a $C^*$-probability space $(\A, \varphi)$. The following statements are equivalent.
\begin{enumerate}
\item $a$ has a bi-free infinitely divisible distribution.
\item the bi-free cumulant set $\Upsilon (a):=\{\kappa_\chi(a): \chi: \{1, 2, \cdots, n\}\rightarrow \{l,r\}, n\ge 1\}$ is conditionally non-negative definite and conditionally bounded.
\item $a$ is the limit in distribution of a sequence of triangular arrays in the limit theorem (Theorem 2.3): For each $N\in \mathbb{N}$, there is a bi-free family $\{(a_{l,N,i,}, a_{r,N,i}): i=1, 2, \cdots, N \}$ of identically distributed two-faced pairs of self-adjoint operators in a $C^*$-probability space $(\A_N, \varphi_N)$ such that $$\kappa_\chi(a)=\lim_{N\rightarrow \infty}N\varphi_N(a_{\chi(1), N,i}a_{\chi(2), N, i}\cdots a_{\chi(n), N, i}), \forall \chi:\{1, 2, \cdots, n\}\rightarrow \{l,r\}, k\ge 1,$$ where $1\le i\le N$.
\end{enumerate}
\end{Theorem}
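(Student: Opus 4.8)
The plan is to prove the cycle of implications $(1)\Rightarrow(3)\Rightarrow(2)\Rightarrow(1)$, following the template of the corresponding theorem for free probability in \cite{NS} (Theorem 13.16 and its surroundings) while substituting the bi-free cumulant machinery of Section~1 and the bi-free limit theorem (Theorem 2.3, Corollary 2.4). The implication $(1)\Rightarrow(3)$ is essentially definitional: if $a$ is bi-free infinitely divisible, then for each $N$ we are handed a bi-free family $\{(a_{l,N,i},a_{r,N,i}):1\le i\le N\}$ of identically distributed self-adjoint two-faced pairs with $S_N$ distributed as $a$; since the distributions of the $S_N$ are all equal to that of $a$, the sequence $\{S_N\}$ trivially converges in distribution to $a$, so by Theorem~2.3 the limits $\lim_{N\to\infty}N\varphi_N(a_{\chi(1),N,i}\cdots a_{\chi(n),N,i})$ exist for every $\chi$ and equal $\kappa_\chi(a)$, which is exactly statement~(3).

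For $(3)\Rightarrow(2)$, the idea is to express the quantities appearing in Definition~3.5 and Definition~3.6 as limits of moments of the triangular array and exploit self-adjointness and positivity of $\varphi_N$. Concretely, for a finite collection $\chi_i:\{1,\dots,i\}\to\{l,r\}$, $i=1,\dots,k$, and scalars $\alpha_1,\dots,\alpha_k$, one sets $X_{N}=\sum_{i=1}^k\alpha_i\,a_{\chi_i(1),N,1}a_{\chi_i(2),N,1}\cdots a_{\chi_i(i),N,1}$ (a single fixed copy $i=1$), and observes, using the multilinearity of $\kappa_\chi$ together with statement~(3), that
$$\sum_{n,m=1}^k\alpha_n\overline{\alpha_m}\,\kappa_{\chi_n\sqcup\chi_m}(a)=\lim_{N\to\infty}N\,\varphi_N\!\big(X_N^*X_N\big)\ge 0,$$
because $\varphi_N$ is positive and $N>0$; this is conditional non-negative definiteness. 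For conditional boundedness one inserts the bounded self-adjoint operator $a_\lambda$: with $Y_N=X_N\,a_{\lambda,N,1}$ one has $\varphi_N(Y_N^*Y_N)\le\|a_{\lambda,N,1}\|^2\varphi_N(X_N^*X_N)$, and the norms $\|a_{\lambda,N,1}\|$ must be shown to stay bounded as $N\to\infty$ --- this follows because $S_N$ has the fixed distribution of the bounded operator $a_\lambda$ and the $a_{\lambda,N,i}$ are bi-free (hence, by a standard norm estimate for sums of bi-free self-adjoint elements, $N^{1/2}\|a_{\lambda,N,1}\|$ or a comparable quantity is controlled). Passing to the limit and multiplying by $N$ yields the inequality in Definition~3.6 with $L=\sup_N\|a_{\lambda,N,1}\|^2<\infty$; Lemma~3.4 is used throughout to discard terms in which the inserted unit $1$ (coming from the scalar parts $\lambda_1,\lambda_2$, if present) forms a singleton block.

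The implication $(2)\Rightarrow(1)$ is the substantive one and is where I expect the main obstacle. The strategy is to construct, from a conditionally non-negative definite and conditionally bounded cumulant set, a full Fock space model of the type appearing in Corollary~3.3. Conditional non-negative definiteness gives a semi-inner product on the free tensor algebra $\mathbb{C}\langle X_l,X_r\rangle$ (or rather on an appropriate space indexed by the $\chi$'s) via $\langle P,Q\rangle=\kappa_{\,\cdot\,}(\dots)$; quotienting by the null space and completing produces a Hilbert space $\H$, and conditional boundedness ensures that the operators of "multiplication by $a_l$" and "multiplication by $a_r$" descend to bounded operators $L_l,L_r$ on $\H$. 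One then decomposes $L_\lambda$ into a creation part $l(f_\lambda)$, an annihilation part $l(g_\lambda)^*$ (or $r$-versions), a gauge part $\Lambda_\lambda(T_\lambda)$, and a scalar part $\lambda_\lambda 1$ exactly as in the classical Schürmann-type construction, so that by Corollary~3.3(2) the resulting two-faced pair is bi-free infinitely divisible, and by Corollary~3.3(1) together with the cumulant formula its cumulants match $\Upsilon(a)$; since a distribution of a two-faced pair is determined by its bi-free cumulants (as noted via Proposition~5.2 in \cite{MN}), the constructed pair has the same distribution as $a$. The delicate points are: verifying that the gauge operator $T_\lambda$ defined on the dense subspace is genuinely bounded (this is precisely what conditional boundedness is engineered to give, but matching the two definitions requires care in handling the endpoints and the interaction between $l$- and $r$-actions), and checking self-adjointness of the model so that it lands in a $C^*$-probability space --- here the hypothesis that $a_l,a_r$ are self-adjoint, hence $\kappa_\chi(a)$ enjoys the appropriate conjugation-symmetry, is what makes $f_\lambda,g_\lambda$ and $T_\lambda$ compatible. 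I would carry this out by first treating the purely "$l$" and purely "$r$" sub-cumulants (recovering the free case of \cite{NS}) and then bolting on the mixed $l$--$r$ cumulants, which affect only the inner product on $\H$ and the off-diagonal behavior of $L_l,L_r$, not the basic architecture of the Fock model.
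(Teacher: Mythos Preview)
Your cycle $(1)\Rightarrow(3)\Rightarrow(2)\Rightarrow(1)$ and the method for each implication match the paper's proof closely; a few points of divergence are worth noting.

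For $(3)\Rightarrow(2)$, the ordering in Definition~3.5 is $\kappa_{\chi_n\sqcup\chi_m}(a)=\kappa(a_{\chi_n(1)},\dots,a_{\chi_n(n)},a_{\chi_m(m)},\dots,a_{\chi_m(1)})$, so the positive quantity that matches is $\varphi_N(X_N X_N^*)$, not $\varphi_N(X_N^*X_N)$; the paper writes it this way. More substantively, for conditional boundedness the paper does not attempt to bound $\|a_{\lambda,N,1}\|$ uniformly in $N$ at all: it simply writes the middle factor as $a_\lambda^2$ and takes $L=\|a_\lambda^2\|$, i.e.\ the norm of the original operator in $(\A,\varphi)$. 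Your proposed route via ``a standard norm estimate for sums of bi-free self-adjoint elements'' is not a standard fact and would not give operator-norm control (distributional control, yes; $C^*$-norm in an arbitrary ambient algebra, no). So on this step you should follow the paper's shortcut rather than the detour you sketch.

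For $(2)\Rightarrow(1)$ your outline is right in spirit but the mechanism is slightly different from what you describe. The paper does not build a multiplication operator $L_\lambda$ on $\H$ and then decompose it; instead it works directly on the full Fock space $\F(\H)$ and \emph{defines}
\[
b_\lambda \;=\; \chi(X_\lambda)+\chi(X_\lambda)^*+\Lambda_\chi(X_\lambda)+\kappa(a_\lambda)1,\qquad \chi\in\{l,r\},
\]
where $X_\lambda\in\H$ is the image of the degree-one monomial and, in the gauge term, $X_\lambda$ denotes the \emph{right}-multiplication operator $P\mapsto P\,X_\lambda$ on $\H$. Conditional boundedness is exactly what makes this right-multiplication bounded, and the paper then checks directly (not by a ``left-first, then mixed'' bootstrap) that $\Lambda(X_\lambda)$ is self-adjoint and that the cumulant formula of Corollary~3.3(1) reproduces $\kappa_\chi(a)$ via $\langle X_{\chi(n)}X_{\chi(n-1)}\cdots X_{\chi(2)},\,X_{\chi(1)}\rangle$. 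Lemma~3.4 is used only to strip the scalar parts $\kappa(a_\lambda)1$ when comparing cumulants for $n\ge 2$, not in the $(3)\Rightarrow(2)$ step.
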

\begin{proof}
$(3)\Rightarrow (2)$. For $k\in \mathbb{N}$, $\alpha_i\in \mathbb{C}, i=1, 2, \cdots, k$, and a sequence $\{\kappa_{\chi_i}\in \Upsilon (a): i=1, 2, \cdots, k \}$, by Theorem 2.3, we have
\begin{align*}
&\sum_{n,m=1}^k\alpha_n\overline{\alpha_m}\kappa_{\chi_n\sqcup\chi_m}(a)\\
=&\lim_{N\rightarrow \infty}N\sum_{m,n=1}^k\alpha_n\overline{\alpha_m}\varphi_N(a_{\chi_n(1), N,i}\cdots a_{\chi_n(n), N, i}a_{\chi_m(m), N, i}\cdots a_{\chi_m(1), N, i})\\
=&\lim_{N\rightarrow\infty}N\varphi_N((\sum_{n=1}^k\alpha_n a_{\chi_{n}(1), N, i}\cdots a_{\chi_n(n), N, i})(\sum_{m=1}^k\alpha_m a_{\chi_{m}(1), N, i}\cdots a_{\chi_m(m),  N, i})^*)\ge 0.
\end{align*}
Now we show that $\Upsilon(a)$ is conditionally bounded. For $\alpha_n\in \mathbb{C}$, $a_{\chi_n}=a_{\chi_n(1)}a_{\chi_n(2)}\cdots a_{\chi_n(n)}$, $\chi_n:\{1, 2, \cdots, n\}\rightarrow \{l,r\}, n=1, 2, \cdots, k, k\ge 1$,  and $\lambda\in \{l,r\}$, by the previous  argument, we have
\begin{align*}
&\sum_{n,m=1}^k\alpha_n\overline{\alpha_m}\kappa_{(\chi_n\cup\lambda) \sqcup(\chi_m\cup\lambda)}(a)\\
=&\lim_{N\rightarrow\infty}N\varphi_N((\sum_{n=1}^k\alpha_n a_{\chi_{n}(1), N, i}\cdots a_{\chi_n(n), N, i})a_\lambda^2(\sum_{m=1}^k\alpha_m a_{\chi_{m}(1), N, i}\cdots a_{\chi_m(m),  N, i})^*)\\
\le& \|a_\lambda^2\|\lim_{N\rightarrow\infty}N\varphi_N((\sum_{n=1}^k\alpha_n a_{\chi_{n}(1), N, i}\cdots a_{\chi_n(n), N, i})(\sum_{m=1}^k\alpha_m a_{\chi_{m}(1), N, i}\cdots a_{\chi_m(m),  N, i})^*)\\
=&L\sum_{n,m=1}^k\alpha_n\overline{\alpha_m}\kappa_{\chi_n\sqcup\chi_m}(a).
\end{align*}
We get the desired result with $L=\|a_\lambda^2\|$.

$(2)\Rightarrow (1)$. Let $\mathbb{C}\langle X_l, X_r\rangle$ be the set of all polynomials in two (non-commutative) variables $X_l$ and $X_r$ without constant terms. Let $$X_\chi:=X_{\chi(1)}X_{\chi(2)}\cdots X_{\chi(n)}, \forall \chi:\{1, 2, \cdots, n \}\rightarrow \{l,r\}, n\ge 1.$$
By (2), we can define an inner product on $\mathbb{C}\langle X_l, X_r\rangle$ by a sesquilinear extension  of $$\langle X_{\chi_n}, X_{\chi_m}\rangle=\kappa_{\chi_m\sqcup\chi_n}(a), \forall \chi_i:\{1, 2, \cdots, i\}\rightarrow \{l, r\}, i=m,n, m, n\ge 1.$$ We, thus, get a Hilbert space $\H$ after dividing out the kernel and completion. After identifying $\mathbb{C}\langle X_l, X_r\rangle$ with its image in $\H$, we may treat elements of $\mathbb{C}\langle X_l, X_r\rangle$ as vectors in $\H$ and operators on $\H$. Consider the $C^*$-probability space $B(\F(\H), \tau_\H)$ and the operators $$b=(b_l, b_r)=(l(X_l)+l(X_l)^*+\Lambda_l(X_l)+\kappa(a_l)1,r(X_r)+r(X_r)^*+\Lambda_l(X_r)+\kappa(a_r)1),$$ where $X_\lambda, \lambda\in\{l,r\}$, is  the {\sl right} multiplication operator of $X_\lambda$ on $\H$, that is, $$X_\lambda(X_{\chi(1)}X_{\chi(2)}\cdots X_{\chi(n)})=X_{\chi(1)}\cdots X_{\chi(n)}X_\lambda,$$ for $X_{\chi(1)}\cdots X_{\chi(n)}\in \mathbb{C}\langle X_l,X_r\rangle\subset \H$.
By (2), $X_\lambda$ is a bounded operator on $\H$, therefore, $\Lambda(X_\lambda)\in \B(\F(\H))$.

Now we show that  $\Lambda(X_\lambda)$ is self adjoint on $\F(\H)$.
For $\chi:\{1, 2, \cdots, n\}\rightarrow \{l,r\}, \delta:\{1, 2, \cdots, m\}\rightarrow \{l,r\}$, we have
\begin{align*}
&\langle X_{\chi(1)}\cdots X_{\chi(n)}, \Lambda(X_\lambda)^*X_{\delta(1)}\cdots X_{\delta(m)}\rangle\\
=&\langle \Lambda(X_\lambda)X_{\chi(1)}\cdots X_{\chi(n)}, X_{\delta(1)}\cdots X_{\delta(m)}\rangle\\
=&\langle X_{\chi(1)}\cdots X_{\chi(n)}X_{\lambda}, X_{\delta(1)}\cdots X_{\delta(m)}\rangle\\
=&\kappa_{\delta\sqcup(\chi\sqcup\lambda)}(a_{\delta(1)}, \cdots, a_{\delta(m)}, a_{\lambda}, a_{\chi(n)},\cdots, a_{\chi(1)})\\
=&\langle X_{\chi(1)}\cdots X_{\chi(n)}, X_{\delta(1)}\cdots X_{\delta(m)}X_\lambda\rangle\\
=&\langle X_{\chi(1)}\cdots X_{\chi(n)}, \Lambda(X_\lambda)X_{\delta(1)}\cdots X_{\delta(m)}\rangle
\end{align*}
It implies that $\Lambda(X_\lambda)=\Lambda(X_\lambda)^*$. Thus, $b_l$ and $b_r$ are self adjoint operators in $\B(\F(\H))$.

With the same method, we can prove that in decomposition $\widehat{a}=\widehat{a}_1+\cdots +\widehat{a}_N$ in Corollary 3.3 (2),  each $\widehat{a}_i$ is a pair of two self adjoint operators in a $C^*$-probability space  for $i=1, 2, \cdots, N, N\in \mathbb{N}$. Hence, by Corollary 3.3, $b=(b_l, b_r)$ has a bi-free infinitely divisible distribution in sense of Definition 3.1.

Finally, we show that $a$'s  distribution in $(\A,\varphi)$ is same as that of $b$ in $(B(\F(\H)), \tau_\H)$.

It is obvious that $\kappa(b_\chi)=\kappa(a_\chi)$, $\chi\in \{l,r\}$.

For $n\ge 2$, let
\begin{multline*}
\widetilde{b}=(b_l, b_r)=((b_{l,1}+b_{l,2}+b_{l,3}), (b_{r,1}, b_{r,2}, b_{r,3}))\\
:=((l(X_l)+l(X_l)^*+\Lambda_l(X_l)), (r(X_r)+r(X_r)^*+\Lambda_r(X_r))).
 \end{multline*}
 By Lemma 3.4, $b$ and $\widetilde{b}$ have the same bi-free cumulants $\kappa_\chi$, for any map $\chi:\{1, 2, \cdots, n\}\rightarrow \{l,r\}, n\ge 2$. For $n=2$, by Corollary 3.3, we have $$\kappa_\chi(b_l, b_r)=\sum_{i,j=1}^3\kappa_\chi(b_{l,i}, b_{r,j})=
 \langle X_r, X_l\rangle= \kappa_\chi(a_l, a_r)$$ and, similarly,  $\kappa(b_r, b_l)=\kappa(a_r, a_l)$. For $n>2$, $\chi:\{1, 2, \cdots, n\}\rightarrow \{l,r\}$, we have
 \begin{align*}
 \kappa_\chi(b_{\chi(1)}, b_{\chi(2)}, \cdots, b_{\chi(n)})&=\sum_{\alpha(1), \alpha(2), \cdots, \alpha(n)=1}^3\kappa_\chi(b_{\chi(1), \alpha(1)}, \cdots, b_{\chi(n), \alpha(n)})\\
 =&\langle b_{\chi(2), 3}\cdots b_{\chi(n-1),3}X_{\chi(n)}, X_{\chi(1)}\rangle\\
 =&\langle \Lambda_{\chi(2)}(X_{\chi(2)})\cdots\Lambda_{\chi(n-1)}(X_{\chi(n-1)})X_{\chi(n)}, X_{\chi(1)}\rangle\\
 =&\langle X_{\chi(n)}X_{\chi(n-1)}\cdots X_{\chi(2)}, X_{\chi(1)}\rangle\\
 =&\kappa_\chi (a_{\chi(1)}, a_{\chi(2)}, \cdots, a_{\chi(n)}).
 \end{align*}
 Hence,  $a$ has a bi-free infinitely divisible distribution.

 $(1)\Rightarrow (3)$ is obvious. For each $N\in \mathbb{N}$, choose  a bi-free sequence $$\{(a_{l, N,1},a_{r, N,1}), (a_{l,N,2},a_{r,N,2}) \cdots, (a_{l,N,N},a_{r,N,N})\}$$ of identically distributed two-faced pairs of self-adjoint operators in a $C^*$-probability space $(\A_N, \varphi_N)$ such that $S_N=(\sum_{i=1}^Na_{l,N,i}, \sum_{i=1}^Na_{r,N,i})$ has a distribution same as that of $a$. It is trivial to see that  $S_N$ converges in distribution to $a$, as $N\rightarrow \infty$.
\end{proof}
\section{Bi-free Levy Processes}
In this section, we investigate the  relation between bi-free Levy processes and bi-free infinitely divisible distributions. Let's give the definition of bi-free Levy processes first.
\begin{Definition}
A family $\{a_t=(a_{l,t}, a_{r,t}): t\ge 0\}$ of two-faced pairs of self-adjoint operators in a $C^*$-probability space $(\A, \varphi)$ is called a bi-free Levy process if it satisfies the following conditions.
\begin{enumerate}
\item $a_0=(0,0)$.
\item If $0\le t_1<t_2<\cdots < t_n<\infty$, then $ a_{t_2}-a_{t_1}, \cdots, a_{t_n}-a_{t_{n-1}} $ are bi-free, where $a_t-a_s=(a_{l,t}-a_{l,s}, a_{r,t}-a_{r,s})$.
\item For $0<s<t$, the distribution of $a_t-a_s$ depends only on $t-s$.
\item The distribution $\mu_t$ of $a_t$ converges to $0$, as $t\rightarrow 0+$.
\end{enumerate}
\end{Definition}
\begin{Theorem}Let $a=(a_l, a_r)$ and $a_t=(a_{l,t}, a_{r,t})$ be two-faced pairs of self-adjoint operators in a $C^*$-probability space $(\A,\varphi)$.
\begin{enumerate}
\item Let  $\{a_t=(a_{l,t}, a_{r,t}):t\ge 0\}$ be a bi-free Levy process. Then $a_1$ has a bi-free infinitely divisible distribution.
\item If $a=(a_l, a_r)$ has a bi-free infinitely divisible distribution, then there is a bi-free Levy process $\{b_t=(b_{l,t}, b_{r,t}): t\ge 0\}$ in a $C^*$-probability space $(\B, \phi)$ such that $a$ and $b_1$ have the same distribution.
\end{enumerate}
\end{Theorem}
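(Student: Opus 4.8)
The plan is to prove the two implications separately; (1) is a short telescoping argument, while (2) calls for an explicit Fock-space construction of a bi-free Levy process whose time-one pair carries the prescribed distribution. For (1), fix $N\in\NNN$ and for $1\le i\le N$ put $a_{N,i}:=a_{i/N}-a_{(i-1)/N}=(a_{l,i/N}-a_{l,(i-1)/N},\,a_{r,i/N}-a_{r,(i-1)/N})$, a two-faced pair of self-adjoint operators in $\A$. By Definition 4.1(2) the pairs $a_{N,1},\dots,a_{N,N}$ are bi-free, and by Definition 4.1(3) the distribution of $a_{N,i}$ depends only on $i/N-(i-1)/N=1/N$, so these $N$ pairs are bi-free and identically distributed. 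Since $a_0=(0,0)$, the sums telescope to $\sum_{i=1}^N a_{\lambda,N,i}=a_{\lambda,1}$ for $\lambda\in\{l,r\}$, so $S_N=a_1$ literally has the distribution of $a_1$; taking $(\A_N,\varphi_N)=(\A,\varphi)$ for every $N$, Definition 3.1 is verified and $a_1$ has a bi-free infinitely divisible distribution.

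For (2) I would re-use, but spread over $[0,\infty)$, the operator model from the proof of Theorem 3.7. By Theorem 3.7 the cumulant set $\Upsilon(a)$ is conditionally non-negative definite and conditionally bounded, so that proof produces a Hilbert space $\H$ (the completion of $\CCC\langle X_l,X_r\rangle$ with $\langle X_{\chi_n},X_{\chi_m}\rangle=\kappa_{\chi_m\sqcup\chi_n}(a)$, modulo the kernel) together with bounded right-multiplication operators $X_l,X_r$ on $\H$. Set $\K:=\H\otimes L^2([0,\infty),dx)$ and, for $t\ge 0$ and $\lambda\in\{l,r\}$, let $v_{\lambda,t}:=X_\lambda\otimes\mathbf{1}_{[0,t]}\in\K$ (with $X_\lambda$ read as the degree-one vector in $\H$) and $T_{\lambda,t}:=X_\lambda\otimes M_{[0,t]}\in B(\K)$, where $M_{[0,t]}$ is multiplication by $\mathbf{1}_{[0,t]}$ on $L^2([0,\infty))$. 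Define $b_t:=(b_{l,t},b_{r,t})$ in $B(\F(\K))$ by
\[
b_{l,t}:=l(v_{l,t})+l(v_{l,t})^*+\Lambda_l(T_{l,t})+t\,\kappa(a_l)1,\qquad b_{r,t}:=r(v_{r,t})+r(v_{r,t})^*+\Lambda_r(T_{r,t})+t\,\kappa(a_r)1,
\]
which are self-adjoint by the argument already used for the gauge part in the proof of Theorem 3.7. Since $l,r,\Lambda_l,\Lambda_r$ are linear in their arguments and $\mathbf{1}_{[0,t]}-\mathbf{1}_{[0,s]}=\mathbf{1}_{[s,t]}$ for $s<t$, the increment $b_t-b_s$ is exactly the pair built in the same way from $X_\lambda\otimes\mathbf{1}_{[s,t]}$, $X_\lambda\otimes M_{[s,t]}$ and scalar $(t-s)\kappa(a_\lambda)1$.

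It then remains to check the four axioms of Definition 4.1. Axiom (1) holds since $\mathbf{1}_{[0,0]}=0$ and $M_{[0,0]}=0$. For axiom (2), given $0=t_0<t_1<\dots<t_n$, the spaces $\H\otimes L^2([t_{k-1},t_k])$, $1\le k\le n$, together with $\H\otimes L^2([t_n,\infty))$, decompose $\K$ orthogonally; as $M_{[t_{k-1},t_k]}$ is the projection onto $L^2([t_{k-1},t_k])$, the operator $X_\lambda\otimes M_{[t_{k-1},t_k]}$ preserves $\H\otimes L^2([t_{k-1},t_k])$ and kills its orthocomplement, so $b_{t_k}-b_{t_{k-1}}$ is a two-faced pair in the $k$-th face-pair of Proposition 1.1, whence these increments are bi-free (vacuum expectations being unchanged on enlarging $\F(\H\otimes L^2([0,t_n]))$ to $\F(\K)$). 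Axiom (3) follows because the translation identification $L^2([s,t])\cong L^2([0,t-s])$ induces a unitary of Fock spaces intertwining $b_t-b_s$ with $b_{t-s}$, so these pairs are equidistributed. For axiom (4), repeating the moment-cumulant computation of Theorem 3.7 inside $\F(\K)$ — the extra tensor leg only multiplies inner products by $\|\mathbf{1}_{[0,t]}\|^2=t$, and $M_{[0,t]}\mathbf{1}_{[0,t]}=\mathbf{1}_{[0,t]}$ leaves the gauge action unchanged — gives $\kappa_\chi(b_t)=t\,\kappa_\chi(a)$ for all $n\ge1$ and all $\chi$, so $\mu_t(X_\chi)=\sum_{\pi\in\P^{(\chi)}(n)}t^{|\pi|}\kappa_{\chi,\pi}(a)\to0$ as $t\to0+$, i.e.\ $\mu_t\to0$. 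Finally, putting $t=1$ gives $\kappa_\chi(b_1)=\kappa_\chi(a)$ for every $\chi$, and since bi-free cumulants determine the joint distribution through (1.2), $b_1$ and $a$ have the same distribution, so $\{b_t:t\ge0\}$ is the desired bi-free Levy process.

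No single step is deep, and the argument closely parallels Theorem 4.2 of \cite{GHM} with $L^2([0,\infty))$ in place of their bi-partite model; the main place calling for care is axiom (2): one must verify both support conditions of Proposition 1.1 for $X_\lambda\otimes M_{[s,t]}$, check that $b_t-b_s$ genuinely lies in the relevant face-pair (using that those algebras are unital and closed under adjoints), and confirm that the vacuum state is unchanged under the Fock-space restriction — together with making sure that the Theorem 3.7 moment-cumulant computation transfers verbatim to the enlarged model. These points are routine but should be spelled out carefully.
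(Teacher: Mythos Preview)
Your proposal is correct and follows essentially the same route as the paper: part (1) is the identical telescoping argument $a_1=\sum_{i=1}^N(a_{i/N}-a_{(i-1)/N})$, and part (2) is the same Fock-space construction over $\H\otimes L^2(\mathbb{R}_+)$ adapted from \cite{GHM} (and \cite{GSS}), with bi-freeness of increments coming from Proposition 1.1 applied to the orthogonal decomposition of $\K$ by the subintervals, and with the cumulant computation $\kappa_\chi(b_t)=t\,\kappa_\chi(a)$ via Corollary 3.3. The only cosmetic differences are your swapped tensor order, the use of closed rather than half-open intervals (immaterial in $L^2$), your explicit inclusion of the tail summand $\H\otimes L^2([t_n,\infty))$ in the decomposition of $\K$, and your verification of axiom (3) by the translation unitary rather than by directly computing the increment cumulants.
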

\begin{proof}For $0<s, t$, $a_{t+s}=(a_{t+s}-a_{s})+a_s$. By the definition of bi-free Levy processes, $(a_{t+s}-a_{s})$ and $a_s$ are bi-free, and $a_{t+s}-a_s$ and $a_t$ have the same distribution.  Generally, for any $n\in \mathbb{N}$, $$a_1=a_{1/n}+(a_{1}-a_{1/n})=\cdots=a_{1/n}+\sum_{i=1}^{n-1}(a_{(i+1)/n}-a_{i/n}).$$ Hence, $a_1$ has a bi-free infinitely divisible distribution.

Let $a=(a_l, a_r)$ has a bi-free infinitely divisible distribution. By The proof of Theorem 3.7, we can choose $a$ as $$a=(l(X_l)+l(X_l)^*+\Lambda_l(X_l)+\kappa(a_l)1,r(X_r)+r(X_r)^*+\Lambda_l(X_r)+\kappa(a_r)1)$$ on the $C^*$-probability space $(B(\F(\H)), \tau_\H)$, where $\H$ is the Hilbert space obtained from the polynomial set $\mathbb{C}\langle X_l,X_r\rangle$ by a special sesquilinear form defined in the proof of Theorem 3.7. The following construction is adapted from the proof of Theorem 4.2 in \cite{GHM} (originally, from \cite{GSS}). Define a new Hilbert space $\K=L^2(\mathbb{R}_+, dx)\otimes \H$, where $\mathbb{R}_+=[0,\infty)$. For a Borel set $I\subset \mathbb{R}_+$, let $\chi_I$ be the characteristic function of $I$ in $L^2(\mathbb{R}_+, dx)$. $M_I$ be the multiplication operator of $\chi_I$ on $L^2(\mathbb{R}_+, dx)$.  Define $$X_{l,t}=\chi_{[0, t)}\otimes X_l, X_{r, t}=\chi_{[0,t)}\otimes X_r, A_{l,t}=M_{[0,t)}\otimes M(X_l), A_{r,t}=M_{[0,t)}\otimes M(X_r),$$ where $M(X\chi):\H\rightarrow \H$ is the {\sl right} multiplication operator of $X_\chi $ on $\H$, $\chi\in \{l,r\}$.
Then consider the operators $$b_t=(l(X_{l,t})+l(X_{l,t})^*+\Lambda_l({A_{l,t}})+t\kappa(a_l), r(X_{r,t})+r(X_{r,t})^*+\Lambda_r({A_{r,t}})+t\kappa(a_r)), t> 0, b_0=(0,0)$$ on the $C^*$-probability space $B(\F(\K), \tau_\K)$.

For $0<s<t$, define $X_{\alpha, s,t}=\chi_{[s,t)}\otimes X_\alpha, A_{\alpha, s,t}=M_{[s,t)}\otimes M(X_\alpha), \alpha\in \{\l,r\}$. Then
\begin{multline*}
b_t-b_s:=(c_l, c_r):=(c_{l,1}+c_{l,2}+c_{l,3}+(t-s)\kappa(a_l), c_{r,1}+c_{r,2}+c_{r,3}+(t-s)\kappa(a_r))\\
=(l(X_{l,s,t})+l(X_{l,s, t})^*+\Lambda_l({A_{l,s, t}})+(t-s)\kappa(a_l), r(X_{r,s,t})+r(X_{r,s,t})^*+\Lambda_r({A_{r,s,t}})+(t-s)\kappa(a_r)).
\end{multline*}

Let $0<t_0<t_1<\cdots <t_n$. Define $$\K_0=L^2([0,t_0), dx)\otimes\H, \K_i=L^2([t_{i-1}, t_i),dx)\otimes \H, i=1, 2, \cdots, n.$$ Then $\K=\bigoplus_{i=0}^n\K_i$. Let $\B_i$ and $\C_i$ be the unital $C^*$-algebras generated by $\{l(f):f\in \K_i\}\cup \{\Lambda_l(T): T\in B(\K), T\K_i\subset \K_i, T|_{\K\ominus\K_i}=0\}$ and $\{r(f):f\in \K_i\}\cup \{\Lambda_r(T): T\in B(\K), T\K_i\subset \K_i, T|_{\K\ominus\K_i}=0\}$, respectively. By Proposition 1.1,  $\{(\B_i, \C_i):0\le i\le n\}$ is bi-free in $(B(\F(\K)), \tau_\K)$. Since  $X_{\alpha,t_0}=\chi_{[0,t_0)}\otimes X_\alpha\in \K_0$, and $A_{\alpha,t_0}=M_{[0,t_0)}\otimes M(X_\alpha):\K_0\rightarrow \K_0$, and $A_{\alpha,t_0}|_{\K\ominus\K_0}=0$, for $\alpha\in\{l,r\}$, we have $b_{t_0}\in (\B_0, \C_0)$. Very similarly,  $b_{t_1}-b_{t_0}\in (\B_1, \C_1), \cdots, b_{t_n}-b_{t_{n-1}}\in(B_n, \C_n)$.  It implies that $b_{t_0}, b_{t_1}-b_{t_0}, \cdots, b_n-b_{n-1}$ are bi-free.

It is obvious that $\kappa(c_\chi)=(t-s)\kappa(a_\chi), \chi\in \{l,r\}$. For $n\ge 2$, by Corollary 3.3, $\kappa_\chi(c_{\chi(1)},\cdots, c_{\chi(n)})\ne 0$ if and only if $\kappa_\chi(a_{\chi(1)}, \cdots, a_{\chi(n)})\ne 0$. In this case, we have
\begin{align*}
\kappa_\chi(c_{\chi(1)},\cdots, c_{\chi(n)})=&\langle c_{\chi(2),3}\cdots c_{\chi(n-1), 3}X_{\chi(n), s,t}, X_{\chi(1), s,t}\rangle\\
=&\kappa_\chi(a_{\chi(1)},a_{\chi(2)},\cdots, a_{\chi(n)})\langle\chi_{[s,t)}, \chi_{[s,t)}\rangle_{L^2(\mathbb{R}_+,dx)}\\
=&\kappa_\chi(a_{\chi(1)},a_{\chi(2)},\cdots, a_{\chi(n)})(t-s).
\end{align*}
The above discussion also shows that $\kappa_\chi(b_t)=t\kappa_\chi(a), \forall \chi:\{1, 2, \cdots, n\}\rightarrow \{l,r\}, n\ge 1$. Thus, $\mu_t\rightarrow 0$, as $t\rightarrow 0+$.
It follows that $\{b_t:t\ge 0\}$ is a bi-free Levy process, and  $\kappa_\chi(b_1)=\kappa_\chi(a)$, for all $\chi:\{1, 2, \cdots, n\}\rightarrow \{l,r\}, n\ge 1$.
\end{proof}

\noindent


\begin{thebibliography}{99}
\bibitem [CNS1]{CNS1}I. Charlesworth, B. Nelson, and P. Skoufranis. {\sl On Two-faced families of non-commutative random variables}, to appear in Canadian J. of Math., 2015, 26 pages.
\bibitem [CNS2]{CNS2}I. Charlesworth, B. Nelson, and P. Skoufranis. {\sl Combinatorics for bi-freeness with amalgamation}. Commun. Math. Phys. 338, 801-847 (2015).
\bibitem[GSS]{GSS}P. Glockner, M. Schurmann, and R. Speicher. {\sl Realization of free white noise}. Arch. Math., Vol. 58(1992), 407-416.
\bibitem[GHM]{GHM}Y. Gu, H. Huang, and J. Mingo. {\sl An analogue of the Levy-Hinchin formula for bi-free infinitely divisible distributions}. arXiv:1501.05369v2 [math.OA], 9 July 2015.
\bibitem[KR]{KR}R. Kadison and J. Ringrose. {\sl Fundamentals of the theory of operator algebras}. Graduate Studies in MAth. Vol. 16, AMS, 1997.
\bibitem [MN]{MN}M. Mastnak and A. Nica. {\sl Double-ended queues and joint moments of left-right canonical operators on full Fock spaces}.  Internat. J. Math. 26 (2015), no. 2, 1550016, 34 pp.
\bibitem [NS]{NS}A. Nica and R. Speicher. {\sl Lectures on Combinatorics for Free Probability}, LMS Lecture Notes 335, Cambridge University Press, 2006.
\bibitem[PSa]{PSa}K. Sato. {\sl Levy Processes and infinitely divisible distributions}. Cambrisge University Press, 1999.
\bibitem [PS]{PS}P. Skoufranis. {\sl Independences and partial $R$-transforms in bi-free probability}, to appear in Annales de l'Institut Henri Poincare (H) Probabilites et Statistiques (2015), 31 pages.
\bibitem[Vo1]{Vo1}D. Voiculescu. {\sl Free Probability form Pairs of faces I}.  Comm. Math. Phys. 332 (2014), no. 3, 955-980.
\bibitem[Vo2]{Vo2}D. Voiculescu. {\sl Free Probability form Pairs of faces II}. ArXiv:1308.2035v1, [Math.OA], August, 2013.
\bibitem[VDN]{VDN}D. Voiculescu, K. Dykema, and A. Nica. {\sl Free Random variables}. CRM Monograph Series, Vol. 1, AMS, 1992.
\end{thebibliography}
\end{document}